\newcounter{mathcounter}[section]
\theoremstyle{plain}
\newtheorem{thm}{Theorem}[section]
\let\c@thm\c@mathcounter
\newtheorem{lemma}{Lemma}
\let\c@lemma\c@mathcounter
\newtheorem{prop}{Proposition}
\let\c@prop\c@mathcounter
\newtheorem{cor}{Corollary}
\let\c@cor\c@mathcounter
\theoremstyle{definition}
\newtheorem{defn}{Definition}
\let\c@defn\c@mathcounter
\let\c@example\c@mathcounter
\theoremstyle{remark}
\newtheorem{rem}{Remark}
\let\c@rem\c@mathcounter
\let\c@equation\c@mathcounter
\newtheorem*{thm*}{Theorem}
\newtheorem*{lemma*}{Lemma}
\newtheorem*{prop*}{Proposition}
\newtheorem*{cor*}{Corollary}
\newtheorem*{defn*}{Definition}
\newtheorem*{example*}{Example}
\newtheorem*{rem*}{Remark}
\newcommand{\C}{\mathbb{C}}
\newcommand{\Ha}{\mathbb{H}}
\newcommand{\N}{\mathbb{N}}
\newcommand{\Q}{\mathbb{Q}}
\newcommand{\R}{\mathbb{R}}
\newcommand{\Z}{\mathbb{Z}}
\newcommand{\E}{\mathbb{E}}
\newcommand{\V}{\mathbb{V}}
\newcommand{\Prob}{\mathbb{P}}
\newcommand{\M}{\mathcal{M}}
\newcommand{\Imm}{\mathrm{Im}}
\newcommand{\Ree}{\mathrm{Re}}
\newcommand{\GL}{\mathrm{GL}}
\newcommand{\SL}{\mathrm{SL}}
\newcommand{\PSL}{\mathrm{PSL}}
\newcommand{\minp}{\mathrm{mp}}
\newcommand{\Ta}{\mathrm{T}^1}
\newcommand{\al}{\alpha}
\newcommand{\ga}{\gamma}
\newcommand{\G}{\Gamma}
\newcommand{\eps}{\varepsilon}
\newcommand{\floor}[1]{\left\lfloor #1 \right\rfloor}
\newcommand{\bigO}{\mathcal{O}}
\title[A Central Limit Theorem for the Winding Number of Low-Lying Closed Geodesics]{A Central Limit Theorem for the Winding Number of Low-Lying Closed Geodesics}
\author{Elias Dubno}
\address{Institut f\"ur Mathematik, Universit\"at Z\"urich, Winterthurerstrasse 190, CH-8057 Z\"urich, Switzerland}
\email{elias.dubno@math.uzh.ch}
\begin{document}
\maketitle

\begin{abstract}
We show that the winding of low-lying closed geodesics on the modular surface has a Gaussian limiting distribution when normalized by any standard notion of length, in contrast to the Cauchy distribution arising when allowing arbitrarily deep excursions into the cusp. In addition, we prove a Berry-Esseen bound and a local limit theorem.
\end{abstract}

\setcounter{tocdepth}{1}
\tableofcontents
\setcounter{tocdepth}{2}

\section{Introduction}

By counting the number of oriented windings of a closed geodesic $C$ on the modular surface $\M = \PSL_2(\Z)\backslash \Ha$ about the cusp, we obtain its \textit{winding number} $\Psi(C)$. \\

In \cite{Sarnak2010} (see also \cite{Mozzochi2012}), Sarnak obtained precise counting results for the number of primitive (i.e. once around) closed geodesics on the modular surface of a given winding number. (In fact, he framed his results for linking numbers of modular knots with the trefoil knot, which is an equivalent characterization of the winding about the cusp; we will review this object in more detail in the second part of this introduction.) \\

Moreover, Sarnak proved that when ordering closed geodesics by their geometric length, the ratio of winding to length has a limiting Cauchy distribution; that is, for any $-\infty\leq a < b \leq +\infty$, we have\\
\begin{align*}
\lim_{T\to\infty}\frac{\abs{\{\text{primitive closed geodesics } C\text{: } \ell_g(C)\leq T,~ a\leq \frac{\Psi(C)}{\ell_g(C)}\leq b \}}}{\abs{\{\text{primitive closed geodesics } C\text{: } \ell_g(C)\leq T\}}}  & =  \frac{1}{\pi} \int_a^b \frac{\gamma}{x^2 + \gamma^2} \, dx \\
& = \frac{\arctan\left(\frac{b}{\gamma}\right) - \arctan\left(\frac{a}{\gamma}\right)}{\pi},
\end{align*}
with $\gamma = \frac{3}{\pi}$. Here, $\ell_g(C)$ denotes the geometric (or hyperbolic) length of $C$. Burrin--von Essen \cite{BurrinVonEssen} have generalized Sarnak's results for a wide class of Fuchsian groups. \\

The goal of this paper is to show that once arbitrarily deep cusp excursions are excluded, the heavy tails of the Cauchy law disappear and are replaced by a Gaussian law. 

\begin{rem}
    The Cauchy distribution is the prototype example of a ``pathological'' distribution in the sense that its fat tails imply the divergence of all moments. It also shows up in the related context of Dedekind sums \cite{Vardi1993}, and the common probabilistic mechanism behind the appearance of the Cauchy distribution in both instances is nicely explained in \cite{kowalski-modcauchy}.
\end{rem}

The fat tails of the Cauchy distribution arising here can be explained geometrically by the following heuristic: whenever a closed geodesic goes high in the cusp, its winding number will change drastically compared to its length due to the nature of the hyperbolic metric \cite[Section 1]{BurrinVonEssen}. So in essence, the Cauchy distribution should come from the presence of the cusp.\\

There are various results supporting this heuristic: Calegari--Fujiwara \cite{CalegariFujiwara2009} (see also \cite[Section 6]{Calegari2009stable}) obtained a limiting Gaussian distribution when ordering and normalizing closed geodesics by word length $\ell_w$ using the fact that $\PSL_2(\Z) \cong \Z_2 * \Z_3$. In a slightly different setting, Guivarc'h--Le Jan \cite{Guivarch1993} considered the winding in homology of the geodesic flow. They showed that the limiting distribution is Gaussian when the corresponding harmonic 1-form comes from a cusp form, and Cauchy otherwise. \\

Our plan here is to ``remove the cusp'' by focusing on \textit{low-lying} geodesics. We can do so via a simple Diophantine condition using the well-established connection between the cutting sequence of the geodesic flow and continued fractions \cite{Artin1924, Series1985}. More concretely, unfolding a closed geodesic $C$ from the modular surface produces a geodesic on $\Ha$ connecting two quadratic irrationals which have eventually periodic continued fraction expansions. Although such a lift of $C$ is not unique, the minimal even length periodic part $[\overline{a_1,a_2,\dots,a_n}]$ of one of its endpoints is, and it characterizes $C$. We say that the \textit{period length} of $C$ is $\ell_p(C)=n$. \\

Moreover, this periodic part directly encodes geometric data of $C$. The size of the partial quotients ``measures'' how far the geodesic travels into the cusp, so closed geodesics with \emph{uniformly bounded partial quotients} correspond to low-lying geodesics. This perspective also features in Bourgain--Kontorovich \cite{BourgainKontorovich2017}, where they settle a question of Einsiedler--Lindenstrauss--Michel--Venkatesh \cite{Einsiedler2009} by proving the existence of infinitely many low-lying fundamental geodesics.   

\begin{defn}
    We say that a closed geodesic $C$ is $A$\textit{-low-lying} if all partial quotients in the periodic part associated to $C$ satisfy $a_i\leq A$, and we define $$\Pi_A(N)= \{\textnormal{primitive } A\textnormal{-low-lying closed geodesics } C\text{: } \ell_p(C)\leq N\}$$ 
and $$\pi_A(N) = \abs{\Pi_A(N)}.$$
\end{defn}

When considering $A$-low-lying geodesics for some fixed $A$, it is most natural to order them by period length, which differs from the usual ordering by geometric or word length. Our first main result is the following.

\begin{thm}[Main Central Limit Theorem]\label{thm:mainp}
    Let $A>1$. The winding-to-period length ratio of $A$-low-lying closed geodesics has a limiting Gaussian distribution when ordered by period length. More precisely, for $\sigma_p^2=\sigma^2_p(A) = \frac{A^2-1}{12}$ and any $-\infty\leq a < b \leq +\infty$, we have
\begin{align*}
\lim_{N\to\infty}\frac{\abs{\{C\in\Pi_A(N): ~ a \leq \frac{\Psi(C)}{\sqrt{\ell_p(C)}} \leq b\}}}{\pi_A(N)}  & =  \frac{1}{\sqrt{2\pi\sigma_p^2}} \int_{a}^{b}  e^{-x^2/2\sigma_p^2} \, dx.
\end{align*}
\end{thm}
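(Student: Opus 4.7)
The plan is to reduce the theorem to a classical Central Limit Theorem for bounded independent random variables via two ingredients: a combinatorial description of $\Pi_A(N)$, and an arithmetic formula for $\Psi(C)$ in terms of the partial quotients. A primitive $A$-low-lying geodesic of even period $n$ corresponds to a primitive cyclic orbit of words $(a_1,\dots,a_n) \in \{1,\dots,A\}^n$; by standard Möbius inversion the number of such orbits of length exactly $n$ is $A^n/n + O(A^{n/2}/n)$, so
\[
\pi_A(N) \;=\; \sum_{\substack{n\le N \\ n\ \text{even}}} \frac{A^n}{n} \;+\; O(A^{N/2}),
\]
and is dominated by $n$ close to $N$, which is the only regime for which I will need uniform CLT estimates.

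For the winding formula, I would use the identification (due to Ghys, quantitatively exploited by Sarnak) of $\Psi(C)$ with the linking number of the corresponding modular knot with the trefoil, and the expression of this linking number as the Rademacher function $\Phi$ evaluated at any representative $\gamma \in \PSL_2(\Z)$ of the hyperbolic conjugacy class of $C$. Decomposing $\gamma$ as a product $T^{a_1} S T^{a_2} S \cdots T^{a_n} S$ matching the continued fraction data and exploiting that $\Phi$ is a quasi-morphism with $\Phi(T)=1$, $\Phi(S)=0$ and bounded coboundary yields
\[
\Psi(C) \;=\; \sum_{i=1}^n (-1)^{i+1} a_i \;+\; O_A(1),
\]
uniformly over $A$-low-lying $C$. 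The right-hand side is preserved by even cyclic shifts of the word, which is compatible with the well-definedness of $\Psi(C)$ on the conjugacy class modulo the $C \leftrightarrow C^{-1}$ orientation ambiguity (odd shifts flip the sign).

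Combining the two ingredients, counting $\{C\in\Pi_A(N): \Psi(C)/\sqrt{n}\in[a,b]\}$ reduces to counting words $(a_1,\dots,a_n) \in \{1,\dots,A\}^n$ (with $n$ even and $n\le N$) whose alternating sum $S_n = \sum_{i=1}^n (-1)^{i+1} a_i$ satisfies $S_n/\sqrt{n} \in [a,b]$. For $(a_1,\dots,a_n)$ iid uniform, the variables $(-1)^{i+1}a_i$ are bounded and independent, and (since $n$ is even) $\E[S_n]=0$ while $\V(S_n)=n(A^2-1)/12$. The Lindeberg--Lévy CLT then gives $S_n/\sqrt{n}$ converging in distribution to a Gaussian with variance $\sigma_p^2=(A^2-1)/12$. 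Since $\pi_A(N)$ concentrates on $n$ near $N$ and the target distribution is symmetric (absorbing the orientation ambiguity), summing over even $n\le N$ and dividing by $\pi_A(N)$ produces the claimed limit.

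The principal obstacle is the second step: establishing the winding formula with precisely the alternating-sum shape and prefactor $1$, since any other coefficient would shift the variance away from $(A^2-1)/12$. This demands a careful evaluation of $\Phi$ on the specific matrix decomposition coming from the continued fraction data, together with confirmation that the $O_A(1)$ correction is genuinely uniform over all $A$-low-lying $C$ so that it disappears after rescaling by $\sqrt{n}$. A secondary point is tracking the cyclic class, primitivity, and orientation bookkeeping throughout the reduction, but this is routine once the formula is in hand.
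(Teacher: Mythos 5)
Your proposal follows essentially the same architecture as the paper: identify $\Pi_A(N)$ with (even-length) necklaces over the alphabet $\{1,\dots,A\}$, control primitivity with M\"obius inversion, translate $\Psi$ into an alternating sum of partial quotients, and then feed this into a CLT for bounded i.i.d.\ random variables before summing over $n\le N$. The only real divergence is in presentation: the paper works with characteristic functions (L\'evy continuity), cleanly carrying an $\bigO(A^{N/2}/N)$ error term through the sum $\sum_{n\le N}\frac{A^n}{n}\sim\frac{A}{A-1}\frac{A^N}{N}$, while you invoke a direct distributional CLT and a ``concentration near $n=N$'' argument; both are valid, but the latter needs the observation that (i) the non-primitive and quotient-by-even-shifts bookkeeping introduces only exponentially small corrections, and (ii) the weights $\abs{P_n}\asymp A^n/n$ assign vanishing mass to any fixed prefix $n\le N_0$, so the tail of the weighted average inherits the Gaussian limit.

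The step you flag as the ``principal obstacle''---deriving $\Psi(C)=\sum_i(-1)^{i+1}a_i$ up to an $\bigO_A(1)$ error via the quasi-morphism property of the Rademacher function---is more work than needed: the identity is an \emph{exact} classical formula (it is stated in the paper's introduction and illustrated with $\Psi([\overline{3,2,3,4}])=3-2+3-4=0$, and appears in Sarnak's and Ghys's treatments of linking numbers of modular knots), so no quasi-morphism bookkeeping or uniformity analysis is required. Your weaker $\bigO_A(1)$ version is of course still sufficient after rescaling by $\sqrt{n}$, but you are solving a harder problem than the one posed. The remark about ``orientation ambiguity / odd shifts flipping the sign'' is also unnecessary once one observes, as the paper does, that the equivalence on words is by \emph{even} cyclic shifts only, under which $\Psi$ is invariant; symmetry of the Gaussian plays no role here.
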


We also establish two natural strengthenings of this central limit theorem. First of all, we can prove a Berry--Esseen bound, providing an explicit rate of convergence.

\begin{thm}[Berry--Esseen]\label{thm:BE}
    Let $F_N$ and $\Phi=\Phi_{\sigma_p}$ be the cumulative distribution function of $\frac{\Psi(\cdot)}{\sqrt{\ell_p(\cdot)}}$ on $\Pi_A(N)$ and $\mathcal{N}(0,\sigma_p^2)$, respectively. Then

    $$\sup_{x\in\R}\abs{F_N(x)-\Phi(x)} = \bigO\left(\frac{1}{\sqrt{N}}\right).$$
\end{thm}

There is also a local limit theorem describing the fine-scale distribution of the winding number. 

\begin{thm}[Local Limit Theorem]\label{thm:llt}
    We have 
     $$\frac{\#\{C\in\Pi_A(N): \Psi(C) = k\}}{\pi_A(N)} = \frac{1}{\sqrt{2\pi\sigma_p^2 N}}\exp\left(-\frac{k^2}{2\sigma_p^2 N}\right) + o\left(\frac{1}{\sqrt{N}}\right),$$
    uniformly in $k=\bigO\left(\sqrt{N}\right)$.
\end{thm}

Although period length is a natural ordering parameter for low-lying geodesics via continued fractions, it is non-standard from a geometric or group-theoretic perspective. A priori, ordering by different notions of length could lead to different limiting behavior. The purpose of the following comparison theorem is to show that this does not occur: for low-lying geodesics, all standard notions of length are asymptotically equivalent, and the resulting Gaussian limit is therefore independent of the chosen standard length. \\

More precisely, other natural length functions one can associate to a closed geodesic $C = [\overline{a_1,\dots,a_n}] \in \Pi_A(N)$ include the \emph{geometric} (or \emph{hyperbolic}) length $\ell_g(C)$ of the geodesic, the \emph{word} length $\ell_w(C)$ of the associated hyperbolic conjugacy class in $\PSL_2(\Z)$, and the \emph{maximal} length $\ell_{\max}(C) \equiv N$. \\

\begin{defn}
We call a length function $\ell_*$ \emph{standard} if
\begin{equation*}
\ell_* \in \{ \ell_g, \ell_w, \ell_{\max} \}.
\end{equation*}
\end{defn}

On $\Pi_A(N)$, all standard length functions are asymptotically equivalent to the period length.\\

\begin{thm}[Comparison Theorem]\label{thm:comparison}
Let $\ell_*$ be a standard length. Then there exists a constant $c_* > 0$ such that
\begin{equation*}\label{eq:comparison}
\frac{\ell_*}{\ell_p} \overset{\mathbb{P}}{\longrightarrow} c_*
\qquad \text{as } N \to \infty \text{ on } \Pi_A(N).
\end{equation*}
\end{thm}

The analysis of $\frac{\ell_g}{\ell_p}$ is the most substantial and might be of independent interest; the key step is an approximation scheme using truncated continued fractions, leading to the convergence result via the Ergodic Theorem for Markov chains.

\begin{rem}Statistical questions about the ratios of different notions of lengths have been studied by various authors before. For example, see \cite{SpaldingVeselov2017} and more recently \cite{BIH2025} for an analysis of such ratios for Markov geodesics. In another setting, Pollicott--Sharp \cite{PollicottSharp} showed that for closed geodesics on compact smooth surfaces, the ratio $\frac{\ell_g}{\ell_w}$ converges \emph{on average} to a constant when ordered by geometric length. More recently, Cantrell--Pollicott \cite{CantrellPollicott2022} even established an error term for this convergence. Although the settings are not directly comparable, we emphasize that our Comparison Theorem \ref{thm:comparison} is stronger than such a convergence on average. 
\end{rem}

As an immediate corollary of Theorem \ref{thm:comparison}, we obtain central limit theorems when normalizing by any standard length. The variances depend on the choice of normalization. 

\begin{thm}[CLT with different lengths]\label{thm:otherlengths}
Let $A>1$ and let $\ell_*$ be a standard length. There exists
$\sigma_*^2 = \sigma^2(A,\ell_*) > 0$ such that for any
$-\infty \le a < b \le +\infty$,
\begin{equation*}\label{eq:clt-other-length}
\lim_{N\to\infty}
\frac{\abs{\{ C \in \Pi_A(N) : a \leq \frac{\Psi(C)}{\sqrt{\ell_*(C)}} \leq b \}}}{\pi_A(N)}
=
\frac{1}{\sqrt{2\pi\sigma_*^2}}
\int_a^b e^{-x^2/2\sigma_*^2}\,dx .
\end{equation*}
\end{thm}

\begin{figure}[t]
    \centering
    \includegraphics[width=\textwidth]{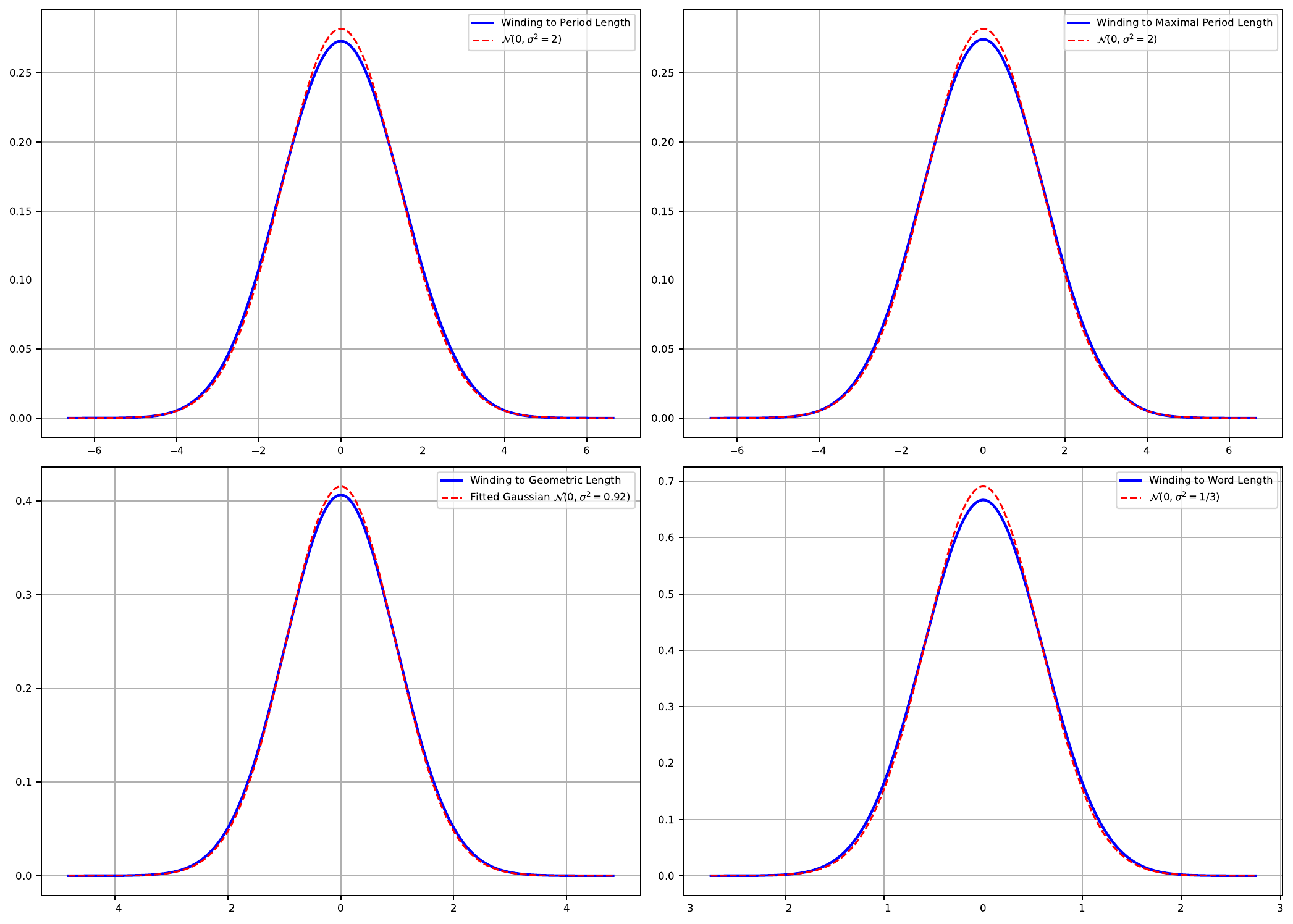}
    \caption{Illustration of Main Results for $A=5$. Empirical distributions are shown in blue and the corresponding limiting Gaussians in dashed red.}
    \label{fig:winding-distributions}
\end{figure}

\begin{rem}\label{rem:explicitvariances}
For the normalizations considered above, the variances can be made explicit in several cases. 
In particular, we have
$$
\sigma_{\max}^2 = \sigma_p^2 = \frac{A^2-1}{12}
\qquad \text{and} \qquad
\sigma_w^2 = \frac{A-1}{12}.
$$
The only variance for which we do not obtain a closed-form expression is $\sigma_g^2$; see Section \ref{sec:comparison} for proofs and further discussion. \\

We note that probabilistic results of this type can also be obtained using methods from the study of subshifts of finite type, such as the machinery of thermodynamic formalism. In particular, more general Berry--Esseen bounds and local limit theorems are available in the work of Coelho–Parry \cite[Theorem 1]{Coelho1990} and Pollicott–Sharp \cite[Theorem 1]{PollicottSharp1994}, respectively. The approach we take here is more natural for the present setting and exploits the explicit nature of the low-lying geodesics under consideration, allowing a direct and self-contained treatment.
\end{rem}
    
Theorem $\ref{thm:otherlengths}$ (applied to $\ell_g$) is in stark contrast to Sarnak's result \cite{Sarnak2010}, illustrating how the restriction to low-lying closed geodesics affects the limiting distribution. Moreover, we point out that the correct normalization for us involves taking a square root, in contrast to the normalization used by Sarnak. A similar dichotomy appears in \cite[Theorem 2.1]{Guivarch1993}, where square root normalization leads to a Gaussian distribution, while linear normalization yields a Cauchy distribution. \\

In the four plots of Figure \ref{fig:winding-distributions}, we empirically compare Theorems \ref{thm:mainp} and \ref{thm:otherlengths} with the corresponding theoretical distributions. The empirical distributions are shown in solid blue, while the theoretical Gaussians are drawn in dashed red lines.\\

We have computed the data for $A = 5$, for which we have
$$\sigma_p^2 = \frac{5^2-1}{12}=2 ,~~\sigma^2_g  \approx 0.92\dots, ~ \sigma_w^2 = \frac{5-1}{12}=\frac{1}{3};$$
see Remark \ref{rem:explicitvariances}.\\

We have considered period lengths up to $N = 12$, producing a total of $\pi_5(12) = \text{42'743'545}$ geodesics.\\

The resulting number of geodesics aligns closely with the theoretical prediction of Proposition \ref{prop:2}, which gives an expected total of
$$
\frac{2\cdot25}{24} \cdot \frac{5^{12}}{12} \approx \text{42'385'525},
$$
with a small relative error of approximately $0.84\%$. \\

We now give a more concrete description of the winding number using Figure \ref{fig2}. There, the dashed blue segment is part of the geodesic arc connecting the two Galois conjugates $$w,w' = \frac{8\pm\sqrt{84}}{5}$$ with purely periodic continued fraction $w=[\overline{3,2,3,4}]$. Traveling from left to right along the dashed blue segment and continuing along the blue parts of the closed geodesic, we count $a_1=3$ windings around the cusp before hitting the unit circle. Afterwards, we flow along the red, green and white arcs, alternating directions and starting with the dashed line each time after hitting the unit circle. We indeed record $a_2=2$, $a_3=3$ and $a_4=4$, which illustrates the fact that the winding number is nothing else than the alternating sum of the partial quotients. In the above example, the winding number is $\Psi([\overline{3,2,3,4}]) = 3-2+3-4 = 0$, which is also apparent from the symmetry along the $y$-axis. Moreover, the figure illustrates how the size of the partial quotients correlates with the depth of the excursion to the cusp. \\

\begin{figure}[h]
\centering

\begin{tikzpicture}[scale=4.2]

	\clip(-0.5,0.86) rectangle (0.5,3);
    \clip (-1,0) arc[start angle=180, end angle=0, radius=1] -- (1,0) -- (1,6) -- (-1,6) -- cycle;

\begin{scope}
  \clip (-0.5,0.866) -- (-0.5,3) -- (0.5,3) -- (0.5,0.866) -- cycle;
  \clip (-1,0) arc[start angle=180, end angle=0, radius=1] -- (1,0) -- (1,3) -- (-1,3) -- cycle;
  \fill[gray!40] (-1,0) rectangle (1,3);
\end{scope}

\draw[thick] (-0.5,0) -- (-0.5,3);
\draw[thick] (0.5,0) -- (0.5,3);
\draw[thick] (-1,0) arc[start angle=180, end angle=0, radius=1];



  \def\centerX{8/5}
  \def\radius{sqrt(84)/5}

  \draw[blue, ultra thick, dashed] 
    ({\centerX - \radius}, 0) 
    arc[start angle=180, end angle=0, radius=\radius];

    \def\centerX{8/5-1}
  \draw[blue, ultra thick] 
    ({\centerX - \radius}, 0) 
    arc[start angle=180, end angle=0, radius=\radius];

    \def\centerX{8/5-2}
  \draw[blue, ultra thick] 
    ({\centerX - \radius}, 0) 
    arc[start angle=180, end angle=0, radius=\radius];

    \def\centerX{8/5-3}
  \draw[blue, ultra thick] 
    ({\centerX - \radius}, 0) 
    arc[start angle=180, end angle=0, radius=\radius];

    \def\centerX{-1}
  \def\radius{sqrt(84)/7}

    \draw[red, ultra thick, dashed] 
    ({\centerX - \radius}, 0) 
    arc[start angle=180, end angle=0, radius=\radius];

    \def\centerX{-1+1}
    \draw[red, ultra thick] 
    ({\centerX - \radius}, 0) 
    arc[start angle=180, end angle=0, radius=\radius];

    \def\centerX{-1+2}
    \draw[red, ultra thick] 
    ({\centerX - \radius}, 0) 
    arc[start angle=180, end angle=0, radius=\radius];

    \def\centerX{7/5}
  \def\radius{sqrt(84)/5}
  \draw[green!90!black, ultra thick, dashed] 
    ({\centerX - \radius}, 0) 
    arc[start angle=180, end angle=0, radius=\radius];

    \def\centerX{7/5-1}
  \draw[green!90!black, ultra thick] 
    ({\centerX - \radius}, 0) 
    arc[start angle=180, end angle=0, radius=\radius];
 
    \def\centerX{7/5-2}
  \draw[green!90!black, ultra thick] 
    ({\centerX - \radius}, 0) 
    arc[start angle=180, end angle=0, radius=\radius];

    \def\centerX{7/5-3}
  \draw[green!90!black, ultra thick] 
    ({\centerX - \radius}, 0) 
    arc[start angle=180, end angle=0, radius=\radius];

    \def\centerX{-2}
  \def\radius{sqrt(21)/2}
  \draw[white, ultra thick, dashed] 
    ({\centerX - \radius}, 0) 
    arc[start angle=180, end angle=0, radius=\radius];

    \def\centerX{-2+1}
  \draw[white, ultra thick] 
    ({\centerX - \radius}, 0) 
    arc[start angle=180, end angle=0, radius=\radius];

    \def\centerX{-2+2}
  \draw[white, ultra thick] 
    ({\centerX - \radius}, 0) 
    arc[start angle=180, end angle=0, radius=\radius];

    \def\centerX{-2+3}
  \draw[white, ultra thick] 
    ({\centerX - \radius}, 0) 
    arc[start angle=180, end angle=0, radius=\radius];

    \def\centerX{-2+4}
  \draw[white, ultra thick] 
    ({\centerX - \radius}, 0) 
    arc[start angle=180, end angle=0, radius=\radius];
\end{tikzpicture}\\

\caption{The closed geodesic $[\overline{3,2,3,4}]$.}
\label{fig2}
\end{figure}

In general, for a closed geodesic $C$ with associated minimal even-length periodic part $[\overline{a_1,\dots,a_n}]$, its winding number is given by the alternating sum $$\Psi(C) = \Psi([\overline{a_1,\dots,a_n}]) = a_1-a_2+-\dots+a_{n-1}-a_n.$$ 

The winding number has various other interpretations; one of them is the \textit{Rademacher symbol}, which motivates our notation $\Psi$ and arises from the transformation law of the Dedekind eta function \cite{Rademacher1972}. Alternatively, the winding number can be viewed as  real period $$\Psi(C) = \int_C E_2(z)\, dz,$$ of a non-holomorphic Eisenstein series of weight 2; see \cite{Burrin2023}. Lastly, as mentioned at the beginning, the winding number can be interpreted as a linking number. In fact, it has been known at least since Milnor \cite{Milnor1972} that the unit tangent bundle of the modular surface $\Ta\M$ is homeomorphic to $S^3 \setminus \tau$, where $\tau$ is a trefoil knot. Ghys \cite{Ghys2007} proved that a closed geodesic $C$ on $\M$ gives rise to a knot $k_C$ in the complement of $\tau$, and $\Psi(C) = \mathrm{link}(k_C,\tau)$ is the linking number between $k_C$ and the trefoil. Various generalizations in this direction were studied in \cite{DIT2017}. \\

An interesting question, raised independently by Emmanuel Kowalski and Toshiki Matsusaka, is what happens when the truncation parameter $A$ is allowed to grow with $N$, i.e. $A = A(N)$. With the exception of the geometric length normalization, our arguments extend with only minor modifications, including the counting result and the distributional statements normalized by period length or word length. The geometric length case is genuinely different. The heuristic discussed at the beginning of the introduction suggests a transition from the Gaussian distribution obtained here to the Cauchy distribution observed by Sarnak as $A(N) \to \infty$. Our method breaks down when $A$ is unbounded, since the Markov chain argument used in Comparison Theorem \ref{thm:comparison} does not apply in this setting, and we plan to return to this question in future work. \\ 

In a different direction, it would also be interesting to make Theorem \ref{thm:comparison} quantitative by obtaining effective rates of convergence.

\subsection*{Structure of the paper}
In Section \ref{sec:background}, we provide background on closed geodesics and continued fractions. Next, in Section \ref{sec:lowlying} we analyze the structure and asymptotic size of $\Pi_A(N)$, before proving the main central limit theorem in Section \ref{sec:mainproof}. This is done by exploiting the connection to continued fractions and reducing the problem to a classical CLT setting. In Sections \ref{sec:fixed} and \ref{sec:varying} we then establish two strengthenings of this result, namely a Berry--Esseen bound and a local limit theorem, by first proving both statements for fixed period length and then combining them for $\Pi_A(N)$. Finally, in Section \ref{sec:comparison} we prove the comparison theorem and use it to deduce the corresponding central limit theorems for the remaining notions of length.

\subsection*{Notation} Let $f$ and $g$ be two real-valued functions, and let $X,Y$ be random variables.
\begin{itemize}[label=\textendash{}]
    \item We write $f(x) = \bigO(g(x))$ or $f(x) \ll g(x)$ as $x\to a$ if $\exists C>0$ such that $\abs{f(x)} \leq C\abs{g(x)}$ for all $x$ sufficiently close to $a$. \
    \item We write $f(x) \asymp g(x)$ if $f(x) \ll g(x) \ll f(x)$.
    \item We write $f(x)\sim g(x)$ if $\frac{f(x)}{g(x)}\to 1$ as $x\to\infty$. If, in addition, $\sum_{n=1}^N g(n)\to\infty$, then $\sum_{n=1}^N f(n)\sim \sum_{n=1}^N g(n)$ as $N\to\infty$.
    \item We write $f(x) \approx g(x)$ if $f(x) - g(x) \longrightarrow 0$ as $x\rightarrow\infty.$
    \item For a positive integer $A$, we define $[A] \vcentcolon = \{1,...,A\}$.
    \item $G = \PSL_2(\R)$, $\G = \PSL_2(\Z)$.
    \item $\E(X)$ and $\V(X)$ denote the expectation and variance of $X$. 
    \item $\Prob(\cdot)$ denotes the probability of some event. 
    \item $X \overset{d}{\longrightarrow} Y$ and $X \overset{\Prob}{\longrightarrow} Y$ denote convergence of $X$ to $Y$ in distribution and probability, respectively.
    \item $\mathcal{N}(\mu,\sigma^2)$ denotes the normal distribution with expectation $\mu$ and variance $\sigma^2$.
\end{itemize}

\subsection*{Acknowledgments} I would like to thank my advisor Claire Burrin for introducing me to this problem and for her patient, insightful, and encouraging support. I am also grateful to Zheng Fang, Michael Schaller and Andrea Ulliana for several helpful conversations. Finally, I thank Emmanuel Kowalski and Toshiki Matsusaka for suggesting the problem of allowing $A=A(N)\to\infty$, which I plan to return to in upcoming work. This work was supported by Swiss National Science Foundation grant number 201557.

\section{Background}\label{sec:background}

As usual, let $\Ha=\{z=x+iy\in \C: y>0\}$ be the Poincaré upper half-plane, and let $\Ta\Ha = \{(z,v): z\in\Ha, ~ \norm{v}_z=\frac{\abs{v}}{\Imm(z)}=1\}$ denote the unit tangent bundle of $\Ha$. The group $G=\mathrm{PSL}_2(\R)$ acts freely and transitively on $\Ta\Ha$ via 
\begin{equation}\label{eq:1.1}
    \begin{pmatrix}
    a &b \\ c & d
\end{pmatrix}  (z,v) \mapsto \left( \frac{az+b}{cz+d}, \frac{v}{(cz+d)^2}\right).
\end{equation}
This action is invariant under the measure $d\mu = \frac{dxdyd\theta}{y^2}$, where $x=\Ree(z)$, $y=\Imm(z)$ and $\theta=\mathrm{arg}(v)$. For more details, we refer to \cite{Einsiedler2011, Kontorovich2016}.\\ 

Any element in $\Ta\Ha$ can be written as $g(i,i)$ for a unique $g\in G$. By identifying $\Ta\Ha$ and $G$ in this manner, the geodesic flow on $\Ta\Ha$ amounts to multiplication from the right by the diagonal subgroup 
$$A=\left\{\begin{pmatrix}
    e^{t/2}&0 \\ 0 & e^{-t/2}
\end{pmatrix}: t\in\R \right\}.$$

To study closed geodesics on the modular surface $\M = \G\backslash\Ha$, we observe that the action \eqref{eq:1.1} descends to $\Ta\M \cong \G\backslash G$. Closed geodesics then correspond to periodic orbits under the action of $A$. While this holds for general Fuchsian groups, we focus on $\Gamma = \PSL_2(\Z)$ throughout the paper. \\

Another way of representing a closed geodesic is to note that it corresponds to an equivalence class $\G g$ of some \textit{hyperbolic} matrix, that is, its trace satisfies 
\begin{equation}\label{eq:hyperbolic}
    \abs{\text{tr}(g)} > 2.
\end{equation}

Such $g$ acts via linear fractional transformation on the Riemann sphere and \eqref{eq:hyperbolic} ensures that the two fixed points of said action are distinct real numbers. Indeed, we have 

\begin{align}\label{eq:moebius}
    g w = w & \iff \frac{aw+b}{cw+d} = w \nonumber\\ 
    & \iff cw^2 + (d-a)w -b = 0 \\ 
    & \iff w,w' = \frac{a-d \pm \sqrt{\text{tr}(g)^2-4}}{2c},\nonumber
\end{align}

for $g=\pm \begin{pmatrix}
        a & b \\ c & d
    \end{pmatrix}$, and the two fixed points form a pair of Galois conjugate quadratic irrationals.\\
    
We may also obtain that closed geodesic on $\M$ by placing a geodesic on $\Ha$ joining these two fixed points and projecting it down to the modular surface. One can easily verify that this is well defined, that is, the resulting closed geodesic is independent of our choice of $g$ in its equivalence class. \\

Any $w\in\R$ can be written as a continued fraction 
$$w = a_1 + \cfrac{1}{a_2 + \cfrac{1}{a_3 + \cfrac{1}{a_4 + \cdots}}} = [a_1,a_2,a_3,a_4,\dots]
$$
with $a_1\in\Z$ and $a_i\in\Z_{>0}$ for $i>1$. The $a_i$'s are called \emph{partial quotients} of $w$.  Since $w$ is a quadratic irrational, this expansion is $\textit{eventually}$ periodic, and by applying a certain $\G$-action , we may assume that $w$ is \textit{reduced}; that is, the fixed points satisfy $-1 < w' < 0 < 1 < w$ and thus the continued fraction expansion of $w=[\overline{a_1,\dots,a_n}]$ is \textit{purely} periodic. (In that case, it is well-known that $-w' = [0,\overline{a_n,\dots,a_1}]$.) Moreover, by doubling the period if necessary, we may assume that $n$ is even.

\section{Low-Lying Closed Geodesics}\label{sec:lowlying}

For the remainder of the paper, we now fix an integer $A>1$. Whenever we talk about low-lying closed geodesics, we mean $A$-low-lying; that is, all the partial quotients in its periodic part satisfy $a_i\leq A$. \\

The purpose of this section is to study the set $\Pi_A(N)$ of low-lying geodesics of period length up to $N$ and to give an asymptotic count for its size $\pi_A(N)$. \\

We repeat that we can attach a finite sequence $\al = (a_1,\dots,a_n)$ of even length to any closed geodesic. So, as a first guess, one might think that we should consider all even-length finite sequences in the alphabet $[A] = \{1,\dots,A\}$. However, there are two problems: \emph{primitivity} and \emph{even cyclic shifts}. 

\subsection*{Primitivity} We have to take into account the minimal period length of the sequences. For example, the sequences $(1,2)$, $(1,2,1,2)$, and $(1,2,1,2,\dots,1,2)$ all correspond to the same periodic part and thus to the same closed geodesic, so it makes sense to consider only the sequence $(1,2)$. \\

It is easy to see --- but crucial --- that for any sequence $\al=(a_1,\dots,a_n)$, the minimal period length $\minp(\al)$ is a divisor of $n$.

\begin{defn}
A sequence $\alpha = (a_1, \dots, a_n)$ is \emph{primitive} if its minimal period length $\minp(\alpha)$ equals $n$. In the case where $\frac{n}{2}$ is odd, we also consider $\alpha$ as primitive if $\minp(\alpha) = \frac{n}{2}$.
\end{defn}

Consider the following example: Assume that $A=2$, and let's say we want to find all primitive sequences for $n=2$. Then we must consider all sequences of minimal period length $2$ and $1$, i.e. the sequences $(1,2)$ and $(2,1)$, as well as $(1,1)$ and $(2,2)$. When $n=4$, we consider only sequences of minimal period length $4$, of which there are 12.

\begin{prop}\label{prop:1}
    Let $f(n)$ denote the number of sequences $\al\in[A]^n$ with $\minp(\al) = n$. Then $$f(n) = \sum_{k\mid n}\mu(k)A^{n/k} = A^n + \bigO(A^{n/2}),$$ where $\mu(\cdot)$ denotes the Möbius function. 
\end{prop}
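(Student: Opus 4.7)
The plan is to apply the classical Möbius inversion technique. First I would establish the partitioning identity $A^n = \sum_{d \mid n} f(d)$. The key observation is that an arbitrary sequence $\alpha \in [A]^n$ has some minimal period $\minp(\alpha)=d$, and by the remark preceding the proposition, $d$ must divide $n$. Moreover, $\alpha$ is completely determined by its initial block $(a_1,\dots,a_d)$, and this block is itself a sequence of length $d$ with minimal period exactly $d$, i.e.\ one of the $f(d)$ sequences counted at level $d$. Conversely, any such primitive block of length $d \mid n$ produces, by $n/d$-fold repetition, a unique sequence in $[A]^n$ with $\minp = d$. This gives a bijection and hence the identity.

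Next I would apply Möbius inversion to the identity $A^n = \sum_{d \mid n} f(d)$ to obtain
\begin{equation*}
    f(n) = \sum_{k \mid n} \mu(k) A^{n/k}.
\end{equation*}
This is a direct invocation of the standard inversion formula, no work is required beyond citing it.

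For the asymptotic, I would isolate the $k=1$ term, which contributes $A^n$, and bound the remaining terms in absolute value:
\begin{equation*}
    \left| f(n) - A^n \right| \leq \sum_{\substack{k \mid n \\ k \geq 2}} A^{n/k}.
\end{equation*}
The largest such term occurs at the smallest nontrivial divisor of $n$; the worst case is when $2 \mid n$, contributing $A^{n/2}$. All further terms are at most $A^{n/3}$, and since there are at most $n$ divisors, their total is bounded by $n \cdot A^{n/3}$, which is $o(A^{n/2})$ because $A^{n/2}/A^{n/3} = A^{n/6}$ grows exponentially while $n$ grows only polynomially. Therefore $|f(n) - A^n| = O(A^{n/2})$, as desired.

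I do not expect any genuine obstacle in this proof; the only mildly subtle point is verifying that $\minp(\alpha)$ is always a divisor of $n$, which the author flags as crucial just before the statement and which follows from a standard gcd argument (if $\alpha$ has periods $p$ and $n$, then it has period $\gcd(p,n)$, forcing $\minp(\alpha) \mid n$). Everything else is routine.
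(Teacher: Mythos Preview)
Your proposal is correct and follows essentially the same route as the paper: establish the divisor-sum identity $\sum_{d\mid n} f(d) = A^n$, apply M\"obius inversion, and then bound the tail of the resulting sum. The only cosmetic difference is in the error estimate---the paper bounds the remaining terms by the full geometric series $\sum_{k=1}^{\lfloor n/2\rfloor} A^k = \bigO(A^{n/2})$, whereas you separate out the $A^{n/2}$ term and control the rest by $n\cdot A^{n/3}$---but both arguments are equally valid and equally elementary.
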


\begin{proof}
     Fix $n\geq 1$. Note that the number of elements in $[A]^n$ of primitive period $k$ is equal to $f(k)$ if $k$ divides $n$, and $0$ otherwise. It follows that the sum of $f(k)$, where $k$ runs over all divisors of $n$, should give the total number of $n$-tuples, so

    \begin{equation}\label{eq:sumofgks}
    \sum_{k\mid n} f(k) = A^n.
    \end{equation}

    By Möbius Inversion, it follows that $$ f(n) = \sum_{k\mid n}\mu(k)A^{n/k}.$$

    Note that \begin{align*}
        \abs{\sum_{k\mid n}\mu(k)A^{n/k}-A^n} & \leq \sum_{\substack{k\mid n \\ k<n}} A^k \leq \sum_{k=1}^{\floor{n}/2}A^k = \bigO (A^{n/2}),
    \end{align*}
    which gives the error term.
\end{proof}

It follows directly that the same asymptotic count holds for primitive sequences.

\begin{cor}\label{cor:fewnonprim}
    The number of primitive sequences in $[A]^n$ is $$A^n + \bigO(A^{n/2}).$$ Indeed, when $\frac{n}{2}$ is odd, the additional primitive sequences of minimal period $\frac{n}{2}$ contribute at most $A^{n/2}$, which is absorbed into the error term.
\end{cor}

\subsection*{Even Cyclic Shifts} The purely periodic quadratic irrational $w=[\overline{a_1,a_2\dots,a_n}]$ with $n$ even is a fixed point of the hyperbolic matrix \begin{equation}\label{eq:matrixproduct}
    \pm \ga = \begin{pmatrix}
    a_1 & 1 \\ 1 & 0
\end{pmatrix}\begin{pmatrix}
    a_2 & 1 \\ 1 & 0
\end{pmatrix}\dots\begin{pmatrix}
    a_n & 1 \\ 1 & 0
\end{pmatrix}.
\end{equation} Note that $g\in\G$ since $\det(g) = (-1)^n = 1$. From this presentation, it follows that cyclic shifts of a given periodic part yield $\GL_2(\Z)$-conjugates of the corresponding matrix and thus to $\GL_2(\Z)$-equivalent geodesics on $\Ha$. On the other hand, \textit{even} cyclic shifts correspond to $\SL_2(\Z)$-equivalent geodesics, which project down to the same closed geodesic on the modular surface; see also \cite[Section 4]{BurrinVonEssen}. \\

By identifying any two primitive sequences $\al=(a_1,\dots,a_n)$ and $\beta=(b_1,\dots,b_n)$ if one can be obtained by applying a cyclic shift of even length to the other, we get an equivalence relation $\sim$. We define $$P_n \vcentcolon= \big\{\al\in[A]^n\mid ~ \al \text{ primitive}\big\}/\sim,$$ and for any $\al\in P_n$, we define its \textit{period length} to be $$\ell_p(\alpha) = n.$$

In this way, $P_n$ is in one-to-one correspondence with the set of primitive closed geodesics of period length $n$.

\begin{rem}
    In combinatorics, finite sequences modulo cyclic rotations are known as \emph{necklaces} \cite{Berstel2008}, and thus we will sometimes refer to elements of $P_n$ as necklaces. Using similar combinatorial approaches, Basmajian--Suzzi Valli \cite{BasmajianSuzziValli2022} and Basmajian--Liu \cite{BasmajianLiu2023} gave asymptotic counts for the number of reciprocal or low-lying closed geodesics on the modular surface, using the fact that $\G \cong \Z_2 * \Z_3$ and ordering by word length with respect to two generators of the individual factors. 
\end{rem}

For each necklace $\al\in P_n$, there are exactly $\frac{n}{2}$ corresponding (via even cyclic shifts) primitive sequences of length $n$, and thus

\begin{equation}\label{eq:f}
\abs{P_n} = 
    \frac{f(n)}{n/2} = \frac{2A^n}{n}+ \mathcal{O}\left(\frac{A^{n/2}}{n}\right)
\end{equation}

We can identify $\Pi_A(N)$ with $$\{\text{primitive necklaces of period length} \leq N \} = \bigcup_{n\leq N \text{ even}} P_n,$$ and because the union of $P_n$ is disjoint, we can sum over the values of \eqref{eq:f} to obtain an expression for $\pi_A(N)$.

\begin{prop}\label{prop:2}
    Let $N\to \infty$ along even integers. Then $$\pi_A(N) \sim c_A\frac{A^{N}}{N},$$
    where $c_A = \frac{2A^2}{A^2-1}$.
\end{prop}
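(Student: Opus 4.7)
The plan is to compute $\pi_A(N)$ directly by summing the estimate $\abs{P_n} = \frac{2A^n}{n} + \bigO(A^{n/2}/n)$ from \eqref{eq:f} over all even $n \leq N$, and then extract the leading-order contribution.

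As a first step, the disjoint decomposition $\Pi_A(N) = \bigcup_{n \leq N,\, n \text{ even}} P_n$ gives
$$\pi_A(N) \;=\; \sum_{\substack{n \leq N \\ n \text{ even}}} \frac{2A^n}{n} \;+\; \bigO\!\left(\sum_{\substack{n \leq N \\ n \text{ even}}} \frac{A^{n/2}}{n}\right).$$
Since $A > 1$, both sums are dominated by their final terms; a short geometric estimate bounds the error sum by $\bigO(A^{N/2}/N)$, which is $\smallo(A^N/N)$ and therefore absorbed into the asymptotic.

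For the main sum, I would reindex by $k = (N-n)/2 \in \{0, 1, \ldots, N/2-1\}$ and factor out the dominant term, obtaining
$$\sum_{\substack{n \leq N \\ n \text{ even}}} \frac{2A^n}{n} \;=\; \frac{2A^N}{N} \sum_{k=0}^{N/2-1} \frac{1}{(1 - 2k/N)\, A^{2k}}.$$
It then suffices to show that the inner sum converges to $\sum_{k=0}^{\infty} A^{-2k} = \frac{A^2}{A^2-1}$ as $N \to \infty$, which yields the claimed asymptotic with $c_A = \frac{2A^2}{A^2-1}$.

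The only (mild) obstacle is the blow-up of the factor $(1 - 2k/N)^{-1}$ as $k$ approaches $N/2$. I would address this by splitting the sum at $k = N/4$. On the bulk range $0 \leq k \leq N/4$ one has $(1-2k/N)^{-1} \leq 2$, so each summand is dominated pointwise by the summable majorant $2 A^{-2k}$, and dominated convergence delivers the limit $\frac{A^2}{A^2-1}$. On the tail range $N/4 < k \leq N/2 - 1$, the crude bounds $(1-2k/N)^{-1} \leq N/2$ and $A^{-2k} \leq A^{-N/2-2}$ produce a tail contribution of order $\bigO(N \cdot A^{-N/2}) = \smallo(1)$, which completes the proof. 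The exponential decay $A^{-2k}$ easily defeats the polynomial factor $N/(N-2k)$, so no finer analysis is needed.
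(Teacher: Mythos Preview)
Your proposal is correct and follows the same overall strategy as the paper: sum the estimate \eqref{eq:f} over even $n\le N$, bound the error contribution by $\bigO(A^{N/2}/N)$, and establish $\sum_{n\le N,\ n\text{ even}} \tfrac{2A^n}{n} \sim c_A\,A^N/N$. The only difference is in how this last asymptotic is obtained: the paper packages it as Lemmas~\ref{lem:sum}--\ref{lem:sumeven} via Stolz--Ces\`aro (equivalently, summation by parts), whereas you reindex by $k=(N-n)/2$, factor out the top term, and pass to the limit by a split at $k=N/4$ with dominated convergence on the bulk and a crude geometric bound on the tail. Both routes are standard and equally short; the paper's Stolz--Ces\`aro argument is marginally cleaner to state, while yours is entirely self-contained. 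One cosmetic slip: with the pointwise bound $A^{-2k}\le A^{-N/2-2}$ on the tail you would get $\bigO(N^2 A^{-N/2})$ after summing over $\sim N/4$ terms, not $\bigO(N A^{-N/2})$; either way this is $o(1)$ and the conclusion is unaffected (and summing the tail as a geometric series does give your stated $\bigO(N A^{-N/2})$).
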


The proof will follow directly once we have established the following counting results.

\begin{lemma}\label{lem:sum}
    Let $A>1$. As $N\to\infty$, we have $$\sum_{n=1}^{N} \frac{A^n}{n} = \frac{A}{A-1}\frac{A^N}{N} + \bigO\left(\frac{A^{N}} {N^2}\right).$$
\end{lemma}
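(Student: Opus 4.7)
The plan is to show that $S_N := \sum_{n=1}^{N}A^n/n$ is dominated by its last term $A^N/N$, with the earlier terms contributing a geometric tail of ratio $1/A$. Indeed, the ratio of consecutive summands
$$\frac{A^{n+1}/(n+1)}{A^n/n} = A\cdot\frac{n}{n+1}$$
tends to $A$, so heuristically
$$S_N \approx \frac{A^N}{N}\sum_{k\geq 0}A^{-k} = \frac{A}{A-1}\cdot\frac{A^N}{N}.$$
To turn this into a proof, I would rewrite
$$\frac{S_N}{A^N/N} \;=\; \sum_{n=1}^{N}\frac{N}{n}A^{n-N} \;=\; \sum_{k=0}^{N-1}\frac{N}{N-k}A^{-k}$$
(via the substitution $k=N-n$) and show this quantity converges to $\sum_{k=0}^{\infty}A^{-k} = \frac{A}{A-1}$.

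The convergence is established by a standard head/tail split at a cutoff $M=M(N)$ satisfying $M\to\infty$ and $M = o(N)$; concretely, $M = \lfloor\sqrt{N}\rfloor$ works. On the head $0\leq k\leq M$, we have uniformly $\frac{N}{N-k} = 1 + O(k/N) = 1 + O(M/N) = 1+o(1)$, so
$$\sum_{k=0}^{M}\frac{N}{N-k}A^{-k} \;=\; \bigl(1+o(1)\bigr)\sum_{k=0}^{M}A^{-k} \;=\; \frac{A}{A-1} + o(1).$$
On the tail $M < k \leq N-1$, the crude bound $\frac{N}{N-k}\leq N$ combined with $\sum_{k>M}A^{-k} = O(A^{-M})$ yields a contribution $O(N\,A^{-\sqrt{N}}) = o(1)$, since $A>1$.

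There is no real obstacle here: the argument is entirely routine once the sum is rewritten in the form $\sum_k \frac{N}{N-k}A^{-k}$, which makes the geometric nature transparent. The only mild subtlety is to choose $M$ large enough so that the tail $A^{-M}$ kills the factor $N$ from the crude bound, yet small enough so that $M/N \to 0$; any $M$ with $\log N \ll M \ll N$ achieves both, and $\sqrt{N}$ is a convenient choice.
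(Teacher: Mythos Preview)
Your proof is correct. The head/tail split at $M=\lfloor\sqrt N\rfloor$ is carried out cleanly, and the bounds you give are valid; in particular, the crude estimate $\frac{N}{N-k}\le N$ on the tail together with $N A^{-\sqrt N}\to 0$ is exactly what is needed.

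The paper takes a different, slicker route: it simply invokes Stolz--Ces\`aro with $a_N=\sum_{n\le N}A^n/n$ and $b_N=A^N/N$, computing
\[
\frac{a_{N+1}-a_N}{b_{N+1}-b_N}=\frac{A^{N+1}/(N+1)}{A^{N+1}/(N+1)-A^N/N}\longrightarrow \frac{A}{A-1},
\]
which yields the asymptotic in one line. The paper also mentions that summation by parts gives the sharper error $\bigO(A^N/N^2)$. Your approach is more hands-on and entirely self-contained, which has pedagogical value; the paper's approach is shorter and, via the summation-by-parts variant, extracts a quantitative error term that your argument as written does not (though it could be made to, with a little more bookkeeping on the $O(M/N)$ and $O(NA^{-M})$ terms).
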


\begin{proof}
The main term follows from Stolz--Cesàro. To get the error term, one can use summation by parts.
\end{proof}
    
Moreover, we require an estimate for the same sum along even integers. 

\begin{lemma}\label{lem:sumeven}
    Let $A>1$. As $N\to\infty$ along even integers, we have $$\sum_{\substack{n=2\\ \textnormal{even}}}^{N} \frac{A^n}{n} = \frac{A^2}{A^2-1} \frac{A^N}{N} + \bigO\left(\frac{A^{N}} {N^2}\right).$$
\end{lemma}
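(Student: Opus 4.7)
The plan is to reduce the statement directly to Lemma \ref{lem:sum} by the substitution $n = 2m$, which is integer-valued precisely because $N$ runs along even integers. This rewrites the left-hand side as
\begin{equation*}
\sum_{\substack{n=2\\ \textnormal{even}}}^{N} \frac{A^n}{n} \;=\; \frac{1}{2}\sum_{m=1}^{N/2} \frac{B^m}{m}, \qquad \text{where } B := A^2 > 1.
\end{equation*}
Now applying Lemma \ref{lem:sum} with base $B$ and upper limit $N/2$ gives
\begin{equation*}
\frac{1}{2}\sum_{m=1}^{N/2} \frac{B^m}{m} \;\sim\; \frac{1}{2}\cdot\frac{B}{B-1}\cdot\frac{B^{N/2}}{N/2} \;=\; \frac{A^2}{A^2-1}\cdot\frac{A^N}{N},
\end{equation*}
which is exactly the claim.

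If one prefers not to invoke the previous lemma, Stolz--Ces\`aro along the subsequence of even integers gives the same constant directly: setting $a_N := \sum_{n=2,\,\textnormal{even}}^{N} A^n/n$ and $b_N := A^N/N$, the incremental ratio is
\begin{equation*}
\frac{a_{N+2}-a_N}{b_{N+2}-b_N} \;=\; \frac{A^{N+2}/(N+2)}{A^{N+2}/(N+2) - A^N/N} \;=\; \frac{A^2 N}{A^2 N - (N+2)} \;\longrightarrow\; \frac{A^2}{A^2-1}.
\end{equation*}

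There is no real obstacle; the only point requiring any care is the parity restriction on $N$, which is exactly what makes the reindexing $n=2m$ land on the complete range $1\leq m \leq N/2$ of integers. Should an error term of $\bigO(A^N/N^2)$ ever be needed, it can be extracted by performing the summation-by-parts refinement mentioned in the proof of Lemma \ref{lem:sum} on the reindexed sum, but it is not needed for the asymptotic equivalence stated here.
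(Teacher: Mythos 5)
Your main argument is exactly the paper's proof: reindex $n=2m$ to get $\tfrac{1}{2}\sum_{m=1}^{N/2}(A^2)^m/m$ and apply Lemma \ref{lem:sum} with base $A^2$ and upper limit $N/2$. The Stolz--Ces\`aro alternative you add is also correct, but it is not needed and the paper does not use it here.
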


\begin{proof}
Follows directly from Lemma \ref{lem:sum} after replacing $A$ by $A^2$ and $N$ by $N/2$.
\end{proof}

By summing over \eqref{eq:f} in combination with Lemma \ref{lem:sumeven}, it is straight-forward to deduce the asymptotic count from Proposition \ref{prop:2}.
\begin{proof}[Proof of Proposition \ref{prop:2}]
    For $N\to\infty$ along even integers, we have
    \begin{align*}
        \pi_A(N) = \sum_{\substack{n=2 \\ n \text{ even}}}^{N} \abs{P_n} & \sim  2 \sum_{\substack{n=2 \\ n \text{ even}}}^{N} \frac{A^n}{n} \sim  2\frac{A^2}{A^2-1}\frac{A^N}{N}. \qedhere
    \end{align*}
\end{proof}

\section{Proof of Main Central Limit Theorem}\label{sec:mainproof}

To study the limiting distribution, we adopt a standard method based on characteristic functions. \\

A function $f: \N \longrightarrow \R$ is said to have \textit{limiting distribution} $w(x)$ if 
\begin{equation*}
    \lim_{N\rightarrow \infty} \frac{\abs{\{n\leq N: f(n) < x\}}}{N} = w(x).
\end{equation*}

By Lévy's Continuity Theorem, this happens exactly if 
\begin{equation*}
    \phi(t) \vcentcolon = \lim_{N\rightarrow \infty} \frac{1}{N} \sum_{n\leq N} e^{itf(n)} 
\end{equation*}

exists for all $t\in\R$ and is continuous at $0$, and in that case $\phi(t)$ is the characteristic function of $w(x)$, i.e.

\begin{equation*}
    \phi(t) = \int_{\R} e^{itx} \,dw(x).
\end{equation*}

Therefore, we may rewrite Theorem \ref{thm:mainp} as follows:

\begin{thm}\label{thm:2}
For $\sigma_p^2= \frac{A^2-1}{12}$ and $N\to \infty$ along even integers, we have

\begin{equation}\label{eq:expsum}
    \lim_{N\rightarrow \infty}\frac{1}{\pi_A(N)}\sum_{\al\in \Pi_A(N)} e^{\frac{it\Psi(\al)}{\sqrt{\ell_p(\al)}}} = e^{-\frac{1}{2}\sigma_p^2 t^2}
\end{equation}

for any $t\in\R$. Equivalently, \begin{equation}
    S_t(N)\vcentcolon= \sum_{\al\in \Pi_A(N)} e^{\frac{it\Psi(\al)}{\sqrt{\ell_p(\al)}}} \sim \pi_A(N)\cdot e^{-\frac{1}{2}\sigma^2 t^2}.
\end{equation}
\end{thm}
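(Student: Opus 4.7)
The plan is to view the partial quotients $a_1,\dots,a_n$ of a sequence $\alpha \in [A]^n$ (under uniform measure) as iid random variables $Y_1,\dots,Y_n$ uniform on $\{1,\dots,A\}$. Since $n$ is even, the alternating sum $\Psi(\alpha) = \sum_{j=1}^{n/2}(Y_{2j-1}-Y_{2j})$ becomes a sum of $n/2$ iid centered random variables with common variance $\V(Y_1 - Y_2) = 2\sigma_p^2$. The classical CLT (Lindeberg--L\'evy) then delivers $\Psi/\sqrt{n} \overset{d}{\longrightarrow} \mathcal{N}(0,\sigma_p^2)$, or equivalently
\[
\frac{1}{A^n}\sum_{\alpha\in[A]^n} e^{it\Psi(\alpha)/\sqrt{n}} \longrightarrow e^{-\sigma_p^2 t^2/2}.
\]
A direct characteristic-function computation, using the expansion $\log|\phi_{Y_1}(s)|^2 = -\sigma_p^2 s^2 + \bigO(s^3)$ and raising to the power $n/2$ with $s = t/\sqrt{n}$, gives the same limit with explicit error $\bigO(n^{-1/2})$.

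The remaining task is to transfer this uniform average on $[A]^n$ to the necklace average on $\Pi_A(N)$. Two observations are key: first, $\Psi$ is invariant under even cyclic shifts, since such a shift merely permutes the alternating sum; second, by Corollary \ref{cor:fewnonprim} the number of non-primitive sequences in $[A]^n$ is only $\bigO(A^{n/2})$. Combining these with the fact that each necklace in $P_n$ has $n/2$ primitive even-shift representatives (up to the negligible edge case $\minp(\alpha) = n/2$ when $n/2$ is odd), and with $|P_n|\sim 2A^n/n$ from \eqref{eq:f}, gives the pointwise asymptotic
\[
T_n \vcentcolon= \sum_{\alpha\in P_n} e^{it\Psi(\alpha)/\sqrt{n}} \sim |P_n|\cdot e^{-\sigma_p^2 t^2/2}.
\]
Summing over even $n\leq N$ and invoking Proposition \ref{prop:2} then yields $S_t(N) = \sum_n T_n \sim \pi_A(N)\cdot e^{-\sigma_p^2 t^2/2}$: for any $\epsilon>0$, choose $n_0$ so that on the tail $n_0<n\leq N$ the pointwise asymptotic is $\epsilon$-close termwise, while the head $n\leq n_0$ contributes only $\bigO(A^{n_0}) = \smallo(\pi_A(N))$ since $\pi_A(N) \asymp A^N/N$.

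The argument contains no single deep step; the classical CLT does the heavy lifting. The care lies in the bookkeeping of the second step, namely handling the non-primitive sequences and the edge case $\minp(\alpha)=n/2$, but these exceptional contributions are comfortably bounded by the $\bigO(A^{n/2})$ estimates already recorded in Section \ref{sec:lowlying} and so are absorbed into error terms at every stage.
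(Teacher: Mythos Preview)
Your proposal is correct and follows essentially the same approach as the paper: reduce to the uniform average on $[A]^n$ via the $n/2$-to-1 correspondence between primitive sequences and necklaces together with the $\bigO(A^{n/2})$ bound on non-primitives (the paper's Lemma~\ref{lem:replaceprimbyall}), apply the classical CLT to the iid alternating sum (the paper's Lemma~\ref{lem:probabilistic}), and then sum over even $n\le N$ using the asymptotics of Section~\ref{sec:lowlying}. Your head--tail argument for the final summation is a slightly more explicit version of what the paper leaves to Lemma~\ref{lem:sumeven}; incidentally, the ``edge case'' $\minp(\alpha)=n/2$ with $n/2$ odd is already absorbed into the paper's definition of primitive, and one checks that even in that case there are exactly $n/2$ distinct even-shift representatives, so no separate treatment is needed.
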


Our strategy is to first rewrite the exponential sum $S_t(N)$ as a sum over all sequences of length $n\leq N$ by showing that the contribution from the non-primitive words is negligible. \\

We start by writing $$S_t(N) = \sum_{\al\in \Pi_A(N)} e^{\frac{it\Psi(\al)}{\sqrt{\ell_p(\al)}}} = \sum_{\substack{n=2\\ \text{even}}}^{N} \sum_{\al\in P_n} e^{\frac{it\Psi(\al)}{\sqrt{n}}},$$ 

and since for each $\al\in P_n$, there are $\frac{n}{2}$ corresponding primitive sequences of length $n$ (via even cyclic shifts), we may continue 
\begin{equation} \label{eq:expsumsplitted}
    S_t(N) = \sum_{\substack{n=2\\ \text{even}}}^{N} \sum_{\al\in P_n} e^{\frac{it\Psi(\al)}{\sqrt{n}}}
    =  2 \sum_{\substack{n=2 \\ n \text{ even}}}^{N} \frac{1}{n} \sum_{\substack{\al\in[A]^n \\ \text{primitive}}} e^{\frac{it\Psi(\alpha)}{\sqrt{n}}}.
\end{equation}

We can now replace the inner sum over primitive sequences by the sum over all $\al\in[A]^n$. 

\begin{lemma}\label{lem:replaceprimbyall}
    $$\abs{\sum_{\al\in [A]^n} e^{\frac{it\Psi(\al)}{\sqrt{n}}} - \sum_{\substack{\al\in[A]^n \\ \textnormal{primitive}}} e^{\frac{it\Psi(\al)}{\sqrt{n}}}} = \bigO(A^{\frac{n}{2}})$$
\end{lemma}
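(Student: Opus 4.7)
The plan is very short: the difference between the two exponential sums is exactly the sum over non-primitive $\al\in[A]^n$, and since $\abs{e^{it\Psi(\al)/\sqrt{n}}} = 1$, the triangle inequality immediately reduces the lemma to counting non-primitive sequences.

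Concretely, I would write
$$\sum_{\al\in[A]^n} e^{\frac{it\Psi(\al)}{\sqrt{n}}} - \sum_{\substack{\al\in[A]^n\\ \textnormal{primitive}}} e^{\frac{it\Psi(\al)}{\sqrt{n}}} \; = \; \sum_{\substack{\al\in[A]^n\\ \textnormal{non-primitive}}} e^{\frac{it\Psi(\al)}{\sqrt{n}}},$$
and then bound the right-hand side in modulus by the cardinality $\abs{\{\al\in[A]^n : \al \textnormal{ not primitive}\}}$.

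To estimate this count, I would invoke Corollary \ref{cor:fewnonprim}, which states that the number of primitive sequences in $[A]^n$ equals $A^n + \bigO(A^{n/2})$. Subtracting from the total $\abs{[A]^n} = A^n$ leaves $\bigO(A^{n/2})$ non-primitive sequences, which is exactly the claimed bound. Alternatively, one can derive this directly from Proposition \ref{prop:1}: every non-primitive $\al$ has minimal period $k$ strictly dividing $n$ (and, in the special case $n/2$ odd, also $k\neq n/2$), so the number of non-primitive sequences is at most
$$\sum_{\substack{k\mid n\\ k\leq n/2}} f(k) \; \leq \; \sum_{k=1}^{n/2} A^k \; = \; \bigO(A^{n/2}).$$

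There is no real obstacle here: this lemma is a bookkeeping step that repackages the counting result of Corollary \ref{cor:fewnonprim} in the form needed to replace the sum over primitives by the sum over all of $[A]^n$ in the analysis of $S_t(N)$. The only minor subtlety worth flagging is that the definition of \emph{primitive} used in the paper is slightly broader than ``minimal period exactly $n$'' (it also admits $\minp(\al) = n/2$ when $n/2$ is odd), but this merely enlarges the primitive set and so only tightens the complementary bound, leaving the argument unchanged.
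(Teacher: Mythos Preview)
Your proposal is correct and matches the paper's proof exactly: the paper simply says ``Follows from Corollary \ref{cor:fewnonprim} and the trivial bound,'' which is precisely your argument of bounding the difference by the count of non-primitive sequences. Your extra remark about the slightly broader definition of primitivity is accurate and, as you note, only helps.
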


\begin{proof}
    Follows from Corollary \ref{cor:fewnonprim} and the trivial bound.
\end{proof}

Hence, we can continue writing \eqref{eq:expsumsplitted} as
\begin{align*}
    S_t(N) & = 2 \sum_{\substack{n=2 \\ n \text{ even}}}^{N} \frac{1}{n} \sum_{\al\in[A]^n} e^{\frac{it\Psi(\alpha)}{\sqrt{n}}} + \bigO\left( \sum_{n=2 ~ \textrm{even}}^N \frac{A^{n/2}}{n}\right) \\
    & = 2 \sum_{\substack{n=2 \\ n \text{ even}}}^{N} \frac{1}{n} \sum_{\al\in[A]^n} e^{\frac{it\Psi(\alpha)}{\sqrt{n}}} + \bigO\left( \frac{A^{N/2}}{N}\right),
\end{align*}
where we use Lemma \ref{lem:sum} to control the error term. \\

At this point, we are nearly done. Given the statement of Theorem \ref{thm:2}, the error term $\bigO(\frac{A^{N/2}}{N})$ coming from the non-primitive words is negligible, and it remains to apply a standard Central Limit Theorem argument for alternating sums to finish the argument. In fact, all we need is the following lemma. 

\begin{lemma}\label{lem:probabilistic}
    Let $\sigma_p^2=\frac{A^2-1}{12}$. For all $t\in\R$ and as $n\to\infty$, we have $$\sum_{\al\in[A]^n}e^{\frac{it\Psi(\al)}{\sqrt{n}}} \sim A^n e^{-\frac{1}{2}\sigma_p^2 t^2}.$$
\end{lemma}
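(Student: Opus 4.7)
\emph{Proof Plan.} The strategy is to recognize the normalized sum as the characteristic function of a sum of independent, bounded, mean-zero random variables, and then invoke the classical Central Limit Theorem. Under the uniform probability measure on $[A]^n$, the coordinates $a_1,\dots,a_n$ of $\alpha$ become i.i.d.\ uniform random variables on $[A]$, so that
$$\frac{1}{A^n}\sum_{\alpha\in[A]^n} e^{it\Psi(\alpha)/\sqrt n} = \E\!\left[e^{it\Psi/\sqrt n}\right].$$
Writing $\Psi=\sum_{i=1}^n(-1)^{i+1}a_i$, independence factors this expectation as $\phi(t/\sqrt n)^{n/2}\,\phi(-t/\sqrt n)^{n/2} = |\phi(t/\sqrt n)|^{n}$, where $\phi(s) = \frac{1}{A}\sum_{k=1}^A e^{isk}$ is the characteristic function of the uniform law on $[A]$.

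The cleanest way to proceed is to pair coordinates by setting $Y_j \vcentcolon= a_{2j-1}-a_{2j}$ for $j=1,\dots,n/2$. The $Y_j$ are i.i.d., bounded, symmetric about $0$ (hence mean zero), with variance $\V(Y_j)=2\V(a_1)=2\sigma_p^2$, and $\Psi=\sum_{j=1}^{n/2}Y_j$. The classical CLT therefore gives $\Psi/\sqrt{n/2}\overset{d}{\longrightarrow}\mathcal{N}(0,2\sigma_p^2)$, equivalently $\Psi/\sqrt n\overset{d}{\longrightarrow}\mathcal{N}(0,\sigma_p^2)$, and Lévy's Continuity Theorem yields $\E[e^{it\Psi/\sqrt n}]\to e^{-\sigma_p^2 t^2/2}$ for every $t\in\R$.

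Alternatively, because the $a_i$ are bounded one can derive this limit by a direct Taylor expansion, which is entirely elementary: with $\mu=(A+1)/2$ and $\E[a_1^2]-\mu^2=\sigma_p^2$, the cancellation of odd-order terms in $|\phi|^2$ gives $|\phi(t/\sqrt n)|^2 = 1 - \sigma_p^2 t^2/n + \bigO(n^{-2})$, and raising to the $n/2$-th power together with $\log(1+x)=x+\bigO(x^2)$ produces $e^{-\sigma_p^2 t^2/2}+\smallo(1)$. Since $e^{-\sigma_p^2 t^2/2}>0$ for every real $t$, pointwise convergence of the characteristic function upgrades to the asymptotic equivalence $\sim$ in the statement. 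There is no substantive obstacle here: the boundedness of the partial quotients places us in the simplest i.i.d.\ setting for the CLT, and the product structure of the characteristic function makes all error estimates transparent.
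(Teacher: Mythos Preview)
Your proof is correct and follows essentially the same approach as the paper: both pair consecutive coordinates into i.i.d.\ centered variables $Y_j=a_{2j-1}-a_{2j}$ with variance $2\sigma_p^2$, apply the classical CLT, and read off the convergence of characteristic functions. Your added Taylor-expansion alternative and the explicit remark that $e^{-\sigma_p^2 t^2/2}>0$ is needed to pass from pointwise convergence to $\sim$ are nice touches not spelled out in the paper, but the core argument is identical.
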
 

With the lemma in hand, we may write
\begin{equation*}
    S_t(N) \sim 2 ~ e^{-\frac{1}{2}\sigma_p^2 t^2}\sum_{\substack{n=2 \\ n \text{ even}}}^{N} \frac{A^n}{n}  \sim ~ \pi_A(N) \cdot e^{-\frac{1}{2}\sigma_p^2 t^2}.
\end{equation*}

Therefore, it suffices to show Lemma \ref{lem:probabilistic} to conclude the proof of Theorem \ref{thm:2}.

\begin{thm}[Classical CLT]
    Let $(X_k)_{k\geq1}$ be a sequence of i.i.d. random variables, each with finite expectation $\mu$ and variance $\sigma^2>0$. Then $$\frac{\sum_{k=1}^n X_k-n\mu}{\sqrt{n}} \overset{d}{\longrightarrow} \mathcal{N}(0,\sigma^2).$$
\end{thm}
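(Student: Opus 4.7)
The plan is to prove the Classical CLT via characteristic functions, mirroring the overall philosophy of the paper (compare the use of Lévy's Continuity Theorem in Section \ref{sec:mainproof}). First I would reduce to the centered case: setting $Y_k = X_k - \mu$, the sequence $(Y_k)$ is i.i.d.\ with mean zero and variance $\sigma^2$, and the statement for $(Y_k)$ implies the statement for $(X_k)$ verbatim. So assume $\mu = 0$ and write $S_n = \sum_{k=1}^n X_k$, with the goal of showing $S_n/\sqrt{n} \overset{d}{\longrightarrow} \mathcal{N}(0,\sigma^2)$.

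Next I would compute characteristic functions. Let $\phi(t) = \E[e^{itX_1}]$. By independence,
\begin{equation*}
\E\!\left[e^{itS_n/\sqrt{n}}\right] = \phi(t/\sqrt{n})^n.
\end{equation*}
By Lévy's Continuity Theorem, it suffices to prove that for every fixed $t \in \R$,
\begin{equation*}
\phi(t/\sqrt{n})^n \longrightarrow e^{-\sigma^2 t^2/2}
\end{equation*}
as $n \to \infty$, since the right-hand side is the characteristic function of $\mathcal{N}(0,\sigma^2)$ and is continuous at $0$.

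The heart of the argument is therefore a local expansion of $\phi$ near $0$, namely
\begin{equation*}
\phi(u) = 1 - \tfrac{1}{2}\sigma^2 u^2 + \smallo(u^2) \quad \text{as } u \to 0.
\end{equation*}
Granting this, substituting $u = t/\sqrt{n}$ and taking logarithms yields $n \log \phi(t/\sqrt{n}) \to -\sigma^2 t^2/2$, and exponentiating gives the desired limit; the only mild care needed is to note that $\phi(t/\sqrt{n}) \to 1$, so the principal branch of the logarithm is unambiguous for $n$ sufficiently large.

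The main obstacle is justifying the expansion of $\phi$ with \emph{only} a finite second moment in hand (no third moment available). The standard trick I would use is the pointwise inequality
\begin{equation*}
\bigl|\, e^{iu} - 1 - iu + \tfrac{1}{2}u^2 \,\bigr| \leq \min\!\bigl(u^2,\, \tfrac{1}{6}|u|^3\bigr),
\end{equation*}
proved by comparing real and imaginary parts to the Taylor polynomial via the integral form of the remainder. Applying this with $u = tX_1$ and taking expectations,
\begin{equation*}
\bigl|\,\phi(t) - 1 + \tfrac{1}{2}\sigma^2 t^2\,\bigr| \leq \E\!\left[\min\!\bigl(t^2 X_1^2,\, \tfrac{1}{6}|t|^3 |X_1|^3\bigr)\right].
\end{equation*}
Dividing by $t^2$, the integrand is dominated by $X_1^2$ (which is integrable by hypothesis) and tends pointwise to $0$ as $t \to 0$, so by dominated convergence the whole expression is $\smallo(t^2)$. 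This is the subtle point, because a naïve Taylor expansion would demand a third moment that we are not given; once this expansion is in place the rest is bookkeeping.
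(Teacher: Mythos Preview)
Your proof is correct and is the standard characteristic-function argument for the CLT under a bare second-moment hypothesis; the use of the bound $\min(u^2,\tfrac{1}{6}|u|^3)$ together with dominated convergence is exactly the right way to avoid assuming a third moment.

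However, the paper does not prove this theorem at all. It is stated without proof as a classical input and immediately applied to the sequence $X_k = Y_{2k-1} - Y_{2k}$ to deduce Lemma~\ref{lem:probabilistic}. So there is nothing to compare at the level of argument: you have supplied a self-contained proof where the paper simply quotes the result. That said, your approach is entirely in the spirit of the surrounding section, and essentially the same expansion you isolate here (writing $\E[e^{iuX}] = 1 - \tfrac{u^2}{2}(\sigma^2 + \varepsilon(u))$ with $\varepsilon(u)\to 0$) is what the paper later invokes explicitly in the proof of Theorem~\ref{thm:mainvardi}.
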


To apply this result, we let $(Y_k)_{k\geq1}$ be a sequence of i.i.d. random variables, each uniformly distributed on $\{1,\dots,A\}$, and then define $X_k = Y_{2k-1}-Y_{2k}$. The sequence $(X_k)_{k\geq1}$ is then i.i.d. and centered, that is, each $X_k$ has zero expectation.

\begin{lemma}
    The expectation and variance of $X_k$ are given by $\E[X_k] = 0$ and $\sigma^2\vcentcolon=\V(X_k) =\frac{A^2-1}{6}$.
\end{lemma}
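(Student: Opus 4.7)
The plan is a direct computation using linearity of expectation, independence, and the standard mean and second moment of a discrete uniform distribution on $[A]$. The statement is a textbook fact about differences of independent uniform variables, so I expect no genuine obstacle; the only task is to assemble the formulas carefully.

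First, for the expectation I would write $\E[X_k] = \E[Y_{2k-1}] - \E[Y_{2k}]$ by linearity, and then invoke the fact that $Y_{2k-1}$ and $Y_{2k}$ are identically distributed to conclude that the two expectations cancel, giving $\E[X_k] = 0$. This part requires no computation of $\E[Y_1]$ at all.

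Next, for the variance, the key observation is that $Y_{2k-1}$ and $Y_{2k}$ are independent, so
\begin{equation*}
    \V(X_k) = \V(Y_{2k-1}) + \V(-Y_{2k}) = 2\,\V(Y_1),
\end{equation*}
where I use that $\V(-Y) = \V(Y)$ and that all $Y_j$ share the same distribution. It thus remains to compute $\V(Y_1)$ for $Y_1$ uniform on $[A]$. Using $\E[Y_1] = \frac{A+1}{2}$ and $\E[Y_1^2] = \frac{1}{A}\sum_{k=1}^A k^2 = \frac{(A+1)(2A+1)}{6}$, I would expand
\begin{equation*}
    \V(Y_1) = \frac{(A+1)(2A+1)}{6} - \frac{(A+1)^2}{4} = \frac{(A+1)\bigl(2(2A+1)-3(A+1)\bigr)}{12} = \frac{(A+1)(A-1)}{12} = \frac{A^2-1}{12}.
\end{equation*}

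Combining the two steps gives $\V(X_k) = 2 \cdot \frac{A^2-1}{12} = \frac{A^2-1}{6}$, which matches the claimed value of $\sigma^2$. The entire argument is three or four lines once the standard moment identities for the discrete uniform distribution are recalled, and there is no subtlety to overcome.
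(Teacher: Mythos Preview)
Your proof is correct and follows essentially the same approach as the paper: both use independence to reduce $\V(X_k)$ to $2\V(Y_1)$ and then compute the variance of a uniform variable on $[A]$ via the standard first and second moment formulas. The only cosmetic difference is that you spell out the expectation step explicitly and simplify $\V(Y_1)$ before doubling, whereas the paper carries the factor of $2$ through the whole computation.
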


\begin{proof}
     By independence, the variance of the $X_k$'s is computed via \begin{align*}
         \sigma^2 = \V(X)= 2\V(Y) & = 2\left( \E[Y^2]-\E[Y]^2 \right) \\
         & = 2\left(\frac{\sum_{j=1}^A j^2}{A} - \left(\frac{\sum_{j=1}^A j}{A}\right)^2\right) \\
         & = 2\left(\frac{(A+1)(2A+1)}{6}-\left(\frac{A+1}{2}\right)^2\right) = \frac{A^2-1}{6}. \qedhere
     \end{align*}
\end{proof}

It follows directly that $$\frac{\sum_{k=1}^{n/2}X_k}{\sqrt{n/2}} \overset{d}{\longrightarrow} \mathcal{N}\left(0,\sigma^2\right).$$ Equivalently, writing $\sigma_p^2 \vcentcolon = \frac{\sigma^2}{2}=\frac{A^2-1}{12}$, we get $$\frac{\sum_{k=1}^{n/2}X_k}{\sqrt{n}} \overset{d}{\longrightarrow} \mathcal{N}\left(0,\sigma_p^2\right).$$

We can express this identity in terms of characteristic functions: $$\lim_{n\rightarrow\infty}\E\left[e^{it\frac{\sum_{k=1}^{n/2} X_k}{\sqrt{n}}}\right]=e^{-\frac{1}{2}\sigma_p^2 t^2}.$$

Writing out the left-hand side yields $$\lim_{n\rightarrow\infty}\frac{1}{A^n}\sum_{\al\in[A]^n}e^{\frac{it\Psi(\al)}{\sqrt{n}}}=e^{-\frac{1}{2}\sigma_p^2 t^2},$$ and hence it follows that
\begin{equation}
    \sum_{\al\in[A]^n}e^{\frac{it\Psi(\al)}{\sqrt{n}}} \sim A^n e^{-\frac{1}{2}\sigma_p^2 t^2},
\end{equation}
which proves Lemma \ref{lem:probabilistic} and thus concludes the proof of Theorem \ref{thm:mainp}.

\section{Fixed Period Length Estimates}\label{sec:fixed} 

We first establish Berry--Esseen and local limit estimates for fixed period length $n$, and then combine them using the distribution of $\ell_p$ on $\Pi_A(N)$.

\subsection*{Berry--Esseen}

The main tool in proving a Berry--Esseen result for a single $P_n$ is \emph{Esseen's inequality}, which allows us to translate control of characteristic functions into control of cumulative distribution functions. Our presentation follows \cite[Chapter 7.6.2]{Gut2013}. 

\begin{lemma}[Esseen's Inequality]
    Let $U$ and $V$ be random variables with cumulative distribution functions $F_U$ and $F_V$ and characteristic functions $\phi_U$ and $\phi_V$. If $\sup_xF'_V(x) < \infty$, then there exists a constant $C_1>0$ such that for all $T>0$, we have
    $$\sup_{x\in\R}\abs{F_U(x)-F_V(x)} \leq \frac{1}{\pi}\int_{-T}^T\frac{\abs{\phi_U(t)-\phi_V(t)}}{\abs{t}} dt + \frac{C_1}{T}.$$
\end{lemma}

We will first use this inequality to obtain a Berry--Esseen error for each individual set $P_n$ and then combine these estimates to get a global error on $\Pi_A(N) = \cup_{n\leq N} P_n$. \\

To obtain a Berry--Esseen error for each individual set $P_n$, let $\phi_{n}$ be the characteristic function of the random variable $$\frac{\Psi(\cdot)}{\sqrt{\ell_p(\cdot)}}$$ on $\Pi_A(N)$ restricted to $P_n$. In explicit terms, we have 
\begin{equation}\label{eq:phi_n}
    \phi_n(t) = \frac{1}{\#P_n} \sum_{\al\in P_n} e^{it\frac{\Psi(\al)}{\sqrt{n}}}.
\end{equation}

By \eqref{eq:f}, we have $$\frac{1}{\#P_n} = \frac{n}{2A^n}\cdot\frac{1}{1+\bigO\left(\frac{1}{A^{n/2}}\right)} = \frac{n}{2A^n}\left( 1+ \bigO\left( \frac{1}{A^{n/2}}\right) \right).$$

As usual, we can write \begin{align*}
    \sum_{\al\in P_n} e^{it\frac{\Psi(\al)}{\sqrt{n}}} = \frac{2}{n}\sum_{\substack{\al\in[A]^n\\ \textnormal{primitive}}}e^{it\frac{\Psi(\al)}{\sqrt{n}}} = \frac{2}{n}\sum_{\al\in[A]^n}e^{it\frac{\Psi(\al)}{\sqrt{n}}} + \bigO\left(\frac{A^{n/2}}{n}\right).
\end{align*}

Putting it together, we obtain 
\begin{equation}\label{eq:phiphitilde}
\begin{aligned}
\phi_n(t)
&= \frac{1}{\#P_n} \sum_{\alpha\in P_n} e^{it\Psi(\alpha)/\sqrt{n}}  \\
&= \frac{1}{A^n}\left( 1 + \bigO(A^{-n/2}) \right)
   \left( \sum_{\alpha\in[A]^n} e^{it\Psi(\alpha)/\sqrt{n}} + \bigO(A^{n/2}) \right) \\
&= \underbrace{\frac{1}{A^n}\sum_{\alpha\in[A]^n} e^{it\Psi(\alpha)/\sqrt{n}}}_{:=\widetilde{\phi}_n(t)}
   + \bigO(A^{-n/2}).
\end{aligned}
\end{equation}

where $\widetilde{\phi}_n$ is the characteristic function of the suitably normalized alternating sum of uniform i.i.d. random variables on the finite set $[A]$. For such random variables, the third moment is finite. (In fact, all moments are!) It then follows from \cite[Lemma 7.6.2]{Gut2013} that there exists some constant $C_2=C_2(A)$ independent of $n$ such that 
\begin{equation}\label{eq:cfcontrol}
    \abs{\widetilde{\phi}_n(t)-e^{-\frac{1}{2}\sigma^2 t^2}} \leq \frac{C_2}{\sqrt{n}}\abs{t}^3 e^{-\frac{1}{3}t^2}
\end{equation}

for $\abs{t} \leq C_2\sqrt{n}$. \\

We now apply Esseen's inequality to $\phi_n$ and $V=\mathcal{N}(0,\sigma_p^2)$.
For this, we pick $T=T(n)=C_2\sqrt{n}$ such that $\frac{C_1}{T}\ll\frac{1}{\sqrt{n}}$. We also define $\delta=\delta(n)=\frac{1}{n^{1/6}}$ and split the relevant integral into two regions: $$\int_{\abs{t}\leq\delta} + \int_{\delta\leq\abs{t}\leq T} =\vcentcolon I_1 + I_2.$$

Let us first treat $I_1$. For small $\abs{t}$, we have $$e^{-\frac{1}{2}\sigma^2 t^2} = 1 -\frac{1}{2}\sigma^2 t^2 + \bigO(\abs{t}^4)$$ as well as $$\phi_n(t) = 1  -\frac{1}{2}\sigma^2 t^2 + \bigO(\abs{t}^3)$$ up to an exponentially small, negligible error coming from removing the non-primitive words. Therefore, we get
$$I_1 = \int_{\abs{t}\leq\delta}\frac{\abs{\phi_U(t)-\phi_V(t)}}{\abs{t}} dt \ll \int_{\abs{t}\leq\delta} t^2 dt \asymp \delta^3=\frac{1}{\sqrt{n}}.$$

For $I_2$, we plug in \eqref{eq:cfcontrol}, which gives us
\begin{align*}
    I_2 & = \int_{\delta\leq\abs{t}\leq T} \frac{\abs{\phi_U(t)-\phi_V(t)}}{\abs{t}} dt \\
    &\ll \frac{1}{\sqrt{n}}\int_{\R}t^2e^{-\frac{1}{3}t^2}dt + \frac{1}{A^{n/2}}\int_{\delta\leq\abs{t}\leq T} \frac{dt}{t} \ll \frac{1}{\sqrt{n}} +  \frac{\log(T/\delta)}{A^{n/2}} \ll \frac{1}{\sqrt{n}} +  \frac{\log(n)}{A^{n/2}}.
\end{align*}

Therefore, we have proved the following:

\begin{prop}\label{prop:BEn}
    For $n$ even, we have 
    $$\sup_{x\in\R} \abs{\Prob\left( \frac{\Psi}{\sqrt{\ell_p}}\leq x ~\middle|~ \ell_p = n \right)- \Phi(x)} \ll \frac{1}{\sqrt{n}}.$$
\end{prop}

\subsection*{Local Limit Theorem}

Next, we prove a local limit theorem for $P_n$. 

\begin{prop}\label{prop:localllt}
    For any $k\in\Z$ and $n\in2\N$, we have
    $$\Prob(\Psi=k \mid \ell_p=n) = \frac{1}{\sqrt{2\pi\sigma^2 n}}\exp\left(-\frac{k^2}{2\sigma^2 n}\right) + o\left(\frac{1}{\sqrt{n}}\right).$$
\end{prop}

\begin{proof}
    Using Fourier inversion and the change of variables $t=s\sqrt{n}$ yields 
    \begin{align*}
        \Prob(\Psi=k \mid \ell_p=n) & = \frac{1}{2\pi}\int_{-\pi}^{\pi}\E[e^{is\Psi}\mid \ell_p=n]\cdot e^{-isk} \,ds \\
        & = \frac{1}{2\pi\sqrt{n}}\int_{-\pi\sqrt{n}}^{\pi\sqrt{n}}\phi_n(t) e^{-itk/\sqrt{n}} \,dt,
    \end{align*}

with $\phi_n(t)$ as in \eqref{eq:phi_n}. By $\eqref{eq:phiphitilde}$, we can further estimate
\begin{align*}\frac{1}{2\pi\sqrt{n}}\int_{-\pi\sqrt{n}}^{\pi\sqrt{n}}\phi_n(t) e^{-itk/\sqrt{n}} \,dt & = \frac{1}{2\pi\sqrt{n}}\int_{-\pi\sqrt{n}}^{\pi\sqrt{n}}\widetilde{\phi}_n(t) e^{-itk/\sqrt{n}} \,dt + \bigO\left( \frac{1}{2\pi\sqrt{n}}\int_{-\pi\sqrt{n}}^{\pi\sqrt{n}} A^{-n/2}\,dt\right) \\
& = \frac{1}{2\pi\sqrt{n}}\int_{-\pi\sqrt{n}}^{\pi\sqrt{n}}\widetilde{\phi}_n(t) e^{-itk/\sqrt{n}} \,dt + \bigO\left( A^{-n/2}\right).
\end{align*}

The last integral is equal to the probability that the i.i.d. alternating sum is equal to $k$, and thus we can apply a classical Local Limit Theorem --- see e.g. \cite[Section 7.5]{Gut2013} --- and the error term $\bigO(A^{-n/2})$ becomes negligible. 
\end{proof}

\section{Estimates for Varying Period Lengths}\label{sec:varying}

We will now combine our estimates from the previous section for varying $n\leq N$.

\subsection*{Berry--Esseen}
The global Berry--Esseen result follows from Proposition \ref{prop:BEn}.

\begin{proof}[Proof of Theorem \ref{thm:BE}]
    For any even integer $N$ and $x\in\R$, we write $$\Prob_N\left(\frac{\Psi}{\sqrt{\ell_p}} \leq x\right) = \sum_{\substack{n=2\\ \textnormal{even}}}^N \Prob_N(\ell_p = n)\cdot\Prob\left( \frac{\Psi}{\sqrt{\ell_p}}\leq x ~\middle|~ \ell_p = n \right).$$

Subtracting $\Phi(x)$, taking the supremum over all $x\in\R$ and using Proposition, we get
\begin{align*}\sup_{x\in\R}\abs{\Prob_N\left(\frac{\Psi}{\sqrt{\ell_p}} \leq x\right)-\Phi(x)} & \leq \sum_{\substack{n=2\\ \textnormal{even}}}^N \Prob_N(\ell_p = n) \cdot \sup_{x\in\R}\abs{\Prob\left( \frac{\Psi}{\sqrt{\ell_p}}\leq x ~\middle|~ \ell_p = n \right)-\Phi(x)} \\
& \ll \sum_{\substack{n=2\\ \textnormal{even}}}^N \frac{\Prob_N(\ell_p = n)}{\sqrt{n}} = \E_N\left[\frac{1}{\sqrt{\ell_p}}\right].
\end{align*}

To estimate the last expression, we use \eqref{eq:f} and Proposition \ref{prop:2}:

$$\E_N\left[\frac{1}{\sqrt{\ell_p}}\right] = \frac{1}{\pi_A(N)} \sum_{\substack{n=2\\ \textnormal{even}}}^N \frac{1}{\sqrt{n}}\left(\frac{2A^n}{n}+ \bigO\left(\frac{A^{n/2}}{n}\right)\right) \ll \frac{N}{A^N}\sum_{\substack{n=2\\ \textnormal{even}}}^N \frac{A^n}{n^{3/2}} \asymp \frac{N}{A^N}\frac{A^N}{N^{3/2}} = \frac{1}{\sqrt{N}}.$$

The estimate for the last sum follows from Stolz-Cesàro, similar to Lemma \ref{lem:sum}.
\end{proof}

\subsection*{Local Limit Theorem}
Similarly, the Local Limit Theorem \ref{thm:llt} for the union $\Pi_A(N)$ follows from the corresponding Proposition \ref{prop:localllt} for a fixed period length.

\begin{proof}[Proof of Theorem \ref{thm:llt}]

Let $M=M(N)=\sqrt{N}$, and split the even integers up to $N$ into a small and large regime:
\begin{align*}
    \mathcal{S}=\mathcal{S}(N) & = 2\Z \,\cap\, [1,N-M]  \\
    \mathcal{L}=\mathcal{L}(N) & = 2\Z \,\cap\, (N-M,N].
\end{align*}
Then we have 
\begin{equation}\label{eq:largepl}
    \Prob_N(\ell_p \in \mathcal{L}) = 1+o(1),
\end{equation} because its 
$$\Prob_N(\ell_p\in\mathcal{S}) = \frac{1}{\pi_A(N)}\sum_{n\in\mathcal{S}}\#P_n \ll \frac{N}{A^N}\sum_{n\in\mathcal{S}} \frac{A^n}{n} \ll \frac{N}{A^N}\sum_{n\in\mathcal{S}}A^{N-M} \ll N^2A^{-M}.$$

This observation invites us to split
\begin{equation*}
     \Prob_N(\Psi=k) = \sum_{\substack{n=2\\ \textnormal{even}}}^N \Prob_N(\ell_p = n)\Prob(\Psi=k \mid \ell_p = n)
\end{equation*} 
into two sums
\begin{equation}\label{eq:bigeqllt}
    \Prob_N(\Psi=k) = \sum_{n\in\mathcal{S}} \Prob_N(\ell_p = n)\Prob(\Psi=k \mid \ell_p = n) + \sum_{n\in\mathcal{L}} \Prob_N(\ell_p = n)\Prob(\Psi=k \mid \ell_p = n).
\end{equation}

To deal with the first sum, we use  
$$\sum_{n\in\mathcal{S}} \Prob_N (\ell_p = n) \Prob(\Psi=k \mid \ell_p = n) \leq \sum_{n\in\mathcal{S}} \Prob_N(\ell_p=n) = \Prob_N(\ell_p\in \mathcal{S}) \ll N^2A^{-\sqrt{N}}.$$
Therefore, the first sum of \eqref{eq:bigeqllt} is exponentially small in $\sqrt{N}$.\\

The second sum of \eqref{eq:bigeqllt} deals with $n\in\mathcal{L}$. It is here that we make use of the Local Limit Theorem we have established for each individual $n$.
\begin{align*}
    & \sum_{n\in\mathcal{L}} \Prob_N(\ell_p = n)\Prob(\Psi=k \mid \ell_p = n) \\
    & =  \sum_{n\in\mathcal{L}} \Prob_N(\ell_p = n)\left(\frac{1}{\sqrt{2\pi\sigma^2 n}}\exp\left(-\frac{k^2}{2\sigma^2 n}\right) + r(n) \right) \\
    & =  \sum_{n\in\mathcal{L}} \Prob_N(\ell_p = n)\left(\frac{1}{\sqrt{2\pi\sigma^2 n}}\exp\left(-\frac{k^2}{2\sigma^2 n}\right)\right) \, + \, \sum_{n\in\mathcal{L}} \Prob_N(\ell_p = n)\cdot r(n) \\
    & =\vcentcolon  \Sigma_1(N) + \Sigma_2(N)
\end{align*}

for some $r(n)=o\left(\frac{1}{\sqrt{n}}\right)$. \\

Note that $\Sigma_2(N)=o\left(\frac{1}{\sqrt{N}}\right)$. Indeed, pick any $\varepsilon>0$. There exists $n_0$ such that for any $n>n_0$, we have $\abs{r(n)} < \varepsilon/\sqrt{n}$. Now let $N$ be large enough such that $N-M>n_0$. Then we have 
\begin{align*}
    \sqrt{N} \abs{\Sigma_2(N)} & = \sqrt{N}\cdot\sum_{n\in\mathcal{L}} \Prob_N(\ell_p = n)\abs{r(n)} < \ \varepsilon\sqrt{\frac{N}{N-M}}\sum_{n\in\mathcal{L}} \Prob_N(\ell_p = n) \leq  \varepsilon \sqrt{\frac{N}{N-M}}  < 2\varepsilon.
\end{align*}

We will now handle $\Sigma_1(N)$. Since we deal with $n\in\mathcal{L}$, we have $n=N+\bigO(M)$, and thus
\begin{equation*}
    \frac{1}{\sqrt{n}} = \frac{1}{\sqrt{N}}\left(1+\bigO(M/N)\right)^{-1/2} = \frac{1}{\sqrt{N}}\left(1+\bigO(M/N)\right).
\end{equation*}

Moreover, we have 
\begin{equation*}
    \frac{1}{n} = \frac{1}{N}\cdot\frac{1}{1+\bigO(M/N)}= \frac{1}{N}(1+\bigO(M/N)) = \frac{1}{N} + \bigO\left(\frac{M}{N^2}\right),
\end{equation*}
and since we're assuming that $k\ll\sqrt{N}$, we get
\begin{align*}
    \exp\left(-\frac{k^2}{2\sigma^2n}\right) & = \exp\left(-\frac{k^2}{2\sigma^2}\cdot\left(\frac{1}{N} + \bigO\left(\frac{M}{N^2}\right)\right)\right) = \exp\left(-\frac{k^2}{2\sigma^2N}\right)\exp\left(\bigO\left(\frac{k^2M}{N^2}\right)\right) \\
    & = \exp\left(-\frac{k^2}{2\sigma^2N}\right) \cdot \left(1+\bigO(M/N)\right).
\end{align*}

This allows us to replace the Gaussian terms for each individual $n\in\mathcal{L}$ by the Gaussian term for $N$ and a uniform error:
\begin{align*}
    \frac{1}{\sqrt{2\pi\sigma^2 n}}\exp\left(-\frac{k^2}{2\sigma^2 n}\right) & = \frac{1}{\sqrt{2\pi\sigma^2N}}\left(1+\bigO(M/N)\right)\cdot\exp\left(-\frac{k^2}{2\sigma^2N}\right) \cdot \left(1+\bigO(M/N)\right) \\
    & = \frac{1}{\sqrt{2\pi\sigma^2N}}\cdot\exp\left(-\frac{k^2}{2\sigma^2N}\right) \cdot \left(1+\bigO(M/N)\right).
\end{align*}

We can thus estimate $\Sigma_1(N)$ by 
\begin{align*}
    \Sigma_1(N) & = \sum_{n\in\mathcal{L}} \Prob_N(\ell_p = n)\left(\frac{1}{\sqrt{2\pi\sigma^2 n}}\exp\left(-\frac{k^2}{2\sigma^2 n}\right)\right) \\
    & = \frac{1}{\sqrt{2\pi\sigma^2N}}\cdot\exp\left(-\frac{k^2}{2\sigma^2N}\right) \cdot \left(1+\bigO(M/N)\right)\cdot \sum_{n\in\mathcal{L}}\Prob_N(\ell_p = n) \\
    & = \frac{1}{\sqrt{2\pi\sigma^2N}}\cdot\exp\left(-\frac{k^2}{2\sigma^2N}\right) \cdot \left(1+\bigO(M/N)\right)\cdot (1+o(1))\\
    & = \frac{1}{\sqrt{2\pi\sigma^2N}}\cdot\exp\left(-\frac{k^2}{2\sigma^2N}\right) + o\left(\frac{1}{\sqrt{N}}\right)
\end{align*}

Finally, we can collect all our estimates to evaluate \eqref{eq:bigeqllt}:
\begin{align*}
    \Prob_N(\Psi=k) & = \sum_{n\in\mathcal{S}} \Prob_N(\ell_p = n)\Prob(\Psi=k \mid \ell_p = n) + \sum_{n\in\mathcal{L}} \Prob_N(\ell_p = n)\Prob(\Psi=k \mid \ell_p = n) \\
    & \ll N^2 A^{-\sqrt{N}}  + \Sigma_1(N) + \Sigma_2(N)\\
    & = N^2 A^{-\sqrt{N}} + \frac{1}{\sqrt{2\pi\sigma^2N}}\cdot\exp\left(-\frac{k^2}{2\sigma^2N}\right) + o\left(\frac{1}{\sqrt{N}}\right) + o\left(\frac{1}{\sqrt{N}}\right) \\
    & = \frac{1}{\sqrt{2\pi\sigma^2N}}\cdot\exp\left(-\frac{k^2}{2\sigma^2N}\right) + o\left(\frac{1}{\sqrt{N}}\right). \qedhere
\end{align*}
\end{proof}

\section{Comparison of Length Functions}\label{sec:comparison}

In this last section, we prove the Comparison Theorem \ref{thm:comparison} and deduce Theorem \ref{thm:otherlengths} as an immediate consequence. \\

Indeed, let $\ell_*$ be any standard length, and consider the random variable 
\begin{equation*}
    Z^*_N=\frac{\Psi}{\sqrt{\ell_*}}
\end{equation*}
on $\Pi_A(N)$, which we can write as 
\begin{equation*}
    Z^*_N 
= \underbrace{\frac{\Psi}{\sqrt{\ell_p}}}_{= W_N} \sqrt{\frac{\ell_p}{\ell_*}},
\end{equation*}
with $W_N$ converging in distribution to a centered Gaussian; see Theorem \ref{thm:mainp}.\\

We want to apply Slutsky's Theorem, see \cite[Thm 5.11.4]{Gut2013}.

\begin{prop}[Slutsky's Theorem]
    Let $(A_N)_N$ and $(B_N)_N$ be sequences of random variables. If 
    $$A_N \overset{\Prob} {\longrightarrow} \hat{c}, \hspace{1cm} B_N \overset{d}{\longrightarrow} B$$
    as $N \to \infty$, where $\hat{c}$ is a constant, then 
    $$A_N\cdot B_N \overset{d}{\longrightarrow} \hat{c} B.$$
\end{prop}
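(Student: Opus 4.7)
The plan is to reduce the statement to an application of the continuous mapping theorem. The multiplication map $\varphi: \R^2 \to \R$, $(x,y) \mapsto xy$, is continuous, so once the joint convergence $(A_N, B_N) \overset{d}{\longrightarrow} (\hat{c}, B)$ is established, the continuous mapping theorem immediately gives
$$A_N B_N = \varphi(A_N, B_N) \overset{d}{\longrightarrow} \varphi(\hat{c}, B) = \hat{c} B.$$
All the work is therefore in proving the following auxiliary joint convergence statement: if $A_N \overset{\Prob}{\to} \hat{c}$ with $\hat{c}$ a constant and $B_N \overset{d}{\to} B$, then $(A_N, B_N) \overset{d}{\to} (\hat{c}, B)$ in $\R^2$. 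The constancy of $\hat{c}$ is crucial here, since joint weak convergence does not generally follow from marginal weak convergence.

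To prove the joint convergence, I would use the bounded-continuous-function characterization of weak convergence. Fix a bounded continuous $f: \R^2 \to \R$ with $\|f\|_\infty \leq M$, and decompose
$$\E[f(A_N, B_N)] - \E[f(\hat{c}, B)] = \bigl( \E[f(A_N, B_N)] - \E[f(\hat{c}, B_N)] \bigr) + \bigl( \E[f(\hat{c}, B_N)] - \E[f(\hat{c}, B)] \bigr).$$
The second bracket tends to $0$ directly from $B_N \overset{d}{\to} B$, since $y \mapsto f(\hat{c}, y)$ is bounded continuous on $\R$.

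For the first bracket, fix $\eta > 0$. Since $B_N \overset{d}{\to} B$ the family $(B_N)$ is tight, so there exists $K>0$ with $\Prob(|B_N|>K) < \eta$ for all $N$. On the compact rectangle $[\hat{c}-1, \hat{c}+1] \times [-K, K]$, $f$ is uniformly continuous; pick $\delta \in (0,1)$ so that $|x - \hat{c}| < \delta$ and $|y| \leq K$ imply $|f(x,y) - f(\hat{c}, y)| < \eta$. Splitting the expectation according to whether the event $\{|A_N - \hat{c}| < \delta, \; |B_N| \leq K\}$ holds, we bound
$$\bigl| \E[f(A_N, B_N)] - \E[f(\hat{c}, B_N)] \bigr| \leq \eta + 2M\bigl( \Prob(|A_N - \hat{c}| \geq \delta) + \Prob(|B_N| > K) \bigr).$$
Using $A_N \overset{\Prob}{\to} \hat{c}$ and the choice of $K$, the right-hand side is at most $3\eta$ for all sufficiently large $N$. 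Letting $\eta \to 0$ shows both brackets vanish, completing the proof of joint convergence and hence of the theorem.

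The main obstacle is handling the uniform-continuity step carefully: an arbitrary bounded continuous function on $\R^2$ is \emph{not} uniformly continuous, so one has to leverage tightness of $(B_N)$ (which is free from weak convergence) to restrict attention to a compact rectangle where $f$ becomes uniformly continuous. Once that point is secured, the rest is routine $\eps/M$-bookkeeping.
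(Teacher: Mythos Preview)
Your argument is correct and is the standard textbook route: establish joint convergence $(A_N,B_N)\overset{d}{\to}(\hat c,B)$ by the bounded-continuous-function criterion (using tightness of $(B_N)$ to localize and get uniform continuity of $f$), then apply the continuous mapping theorem to the product map. The paper, however, does not prove this proposition at all: it merely states Slutsky's Theorem and cites \cite[Thm~5.11.4]{Gut2013} as a black box. So there is no in-paper proof to compare against; you have supplied what the paper deliberately omits.

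One cosmetic slip: from your displayed bound
\[
\eta + 2M\bigl(\Prob(|A_N-\hat c|\geq \delta)+\Prob(|B_N|>K)\bigr)
\]
you get at most $(1+4M)\eta$ for large $N$, not $3\eta$. This is harmless since $M$ is fixed and $\eta\to 0$.
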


Together with the Continuous Mapping Theorem \cite[Thm 5.10.4]{Gut2013}, this reduces Theorem $\ref{thm:otherlengths}$ to the Comparison Theorem \ref{thm:comparison}. We prove this theorem by considering its three cases separately.

\subsection*{Maximal length}
By \eqref{eq:largepl}, almost all geodesics have ``large'' period length in the sense that 
\begin{equation*}
    \Prob_N\left(1-\frac{1}{\sqrt{N}} \le \frac{\ell_p}{N} \le 1\right)\to 1
\end{equation*}
as $N\to\infty$. Equivalently, the ratio $\ell_p/N$ converges in probability to $1$, which is precisely the desired comparison in this case.

\subsection*{Geometric Length}
The geometric length of a closed geodesic on the modular surface is typically expressed in terms of the fundamental unit of the imaginary quadratic field $\mathbb{Q}(\sqrt{D})$, where $D$ is the discriminant of the associated class of binary quadratic forms. (In particular, all closed geodesics sharing the same discriminant have the same length.) \\

However, one may also compute the geometric length solely in terms of the partial quotients of the associated necklace. 

\begin{prop}
    The geometric length of a closed geodesic with minimal even periodic part $\alpha=(a_1,\dots,a_n)$ is given by $$\ell_g(\alpha) = 2\sum_{j=1}^n \log(T^j \overline{\al}),$$ where $T: \R_{>0}\setminus\Z \to \R_{>0}$ given by $T(x) = \frac{1}{x-\floor{x}}$ denotes the Gauss map, and $\overline{\al}$ is the quadratic irrational defined by extending the continued fraction expansion of $\alpha$ periodically.
\end{prop}

\begin{proof}
    See \cite[Section 3.2]{Series1985}. 
\end{proof}

Kelmer \cite{Kelmer2012} used this expression for the geometric length to prove that closed geodesics of a fixed winding number equidistribute on the modular surface when ordered by $\ell_g$.

\begin{rem}
    For $x>0$ with continued fraction $x=[a_1,a_2,a_3,\dots]$, we have $T(x) = [a_2,a_3,\dots]$.
\end{rem}

\begin{prop}
    As $N\to \infty$ along even integers, the random variable $C_N$ defined on $\Pi_A(N)$ via $$ C_N(\al) \vcentcolon= \frac{\ell_g(\al)}{\ell_p(\al)} = \frac{2}{n}\sum_{j=1}^n \log(T^j \overline{\al})$$ converges in probability to a constant. 
\end{prop}

Because we are dealing with uniformly bounded partial quotients, it is clear that $C_N(\cdot)$ is also uniformly bounded. However, that is not enough to deduce convergence to a constant. \\

Instead, we have to prove some kind of law of large numbers or Birkhoff Ergodic Theorem for the (countable!) subset of periodic sequences in $\{1,\dots,A\}^\N$, where the summands are dependent. Similar results have been obtained for periodic orbits of Axiom A flows \cite{Lalley1987}.\\

We can reduce the problem of studying the convergence of $C_N$ to that of a simpler random variable. 

\begin{lemma}
    If the random variable \begin{align*}
        [A]^N & \longrightarrow \R,\\
        \alpha & \mapsto \frac{2}{N}\sum_{j=1}^N \log(T^j \overline{\al})
    \end{align*} 
    converges in probability to a constant $\hat{c}$, then so does $C_N$.
\end{lemma}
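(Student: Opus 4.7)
The plan is to reduce convergence of $C_N$ on $\Pi_A(N) = \bigcup_{\substack{n \leq N\\n\text{ even}}} P_n$ to the hypothesized convergence on $[A]^n$, applied period-by-period, and then use the asymptotic from Proposition \ref{prop:2} together with Lemma \ref{lem:sumeven} to show that the ``bad'' mass is negligible.

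The crucial observation is that the expression $\frac{2}{n}\sum_{j=1}^n \log(T^j\overline{\alpha})$ is invariant under cyclic shifts of $\alpha \in [A]^n$: since $\overline{\alpha}$ has period dividing $n$, the sum runs over one complete Gauss-shift orbit and is therefore independent of the starting point. Consequently $C_N$ descends to $P_n$ and coincides on every lift with the hypothesized random variable on $[A]^n$. Fixing $\delta > 0$, the hypothesis supplies, for each $\epsilon > 0$, an $n_0$ such that for all $n \geq n_0$,
\begin{equation*}
\#\{\alpha \in [A]^n : |C_N(\alpha) - \hat{c}| > \delta\} \leq \epsilon A^n.
\end{equation*}
Each necklace in $P_n$ has exactly $n/2$ primitive lifts in $[A]^n$ (a short check separating the strictly primitive case $\minp(\alpha)=n$ from the borderline case $\minp(\alpha) = n/2$ with $n/2$ odd), and so
\begin{equation*}
\#\{\alpha \in P_n : |C_N(\alpha) - \hat{c}| > \delta\} \leq \frac{2\epsilon A^n}{n}.
\end{equation*}

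What remains is a summation. The short-period contribution $\sum_{n < n_0} |P_n| = O_{A,n_0}(1)$ is $o(\pi_A(N))$. For $n \geq n_0$, summing the bound above over even $n \leq N$ and applying Lemma \ref{lem:sumeven} yields a total bad count of at most
\begin{equation*}
2\epsilon \sum_{\substack{n_0 \leq n \leq N\\ n\text{ even}}} \frac{A^n}{n} \sim 2\epsilon\cdot \frac{A^2}{A^2-1}\cdot \frac{A^N}{N} = \epsilon\cdot \pi_A(N).
\end{equation*}
Dividing by $\pi_A(N)$ gives $\Prob(|C_N - \hat c| > \delta) \leq \epsilon + o(1)$, and letting $\epsilon \to 0$ completes the proof. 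The only genuine content beyond routine estimates is the shift-invariance that transports the concentration from the uniform measure on $[A]^n$ to the uniform measure on the necklace quotient $P_n$ without loss; once this is noted, everything else reduces cleanly to the counting machinery already developed in Section \ref{sec:lowlying}.
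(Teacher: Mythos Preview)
Your argument is correct and follows the same three-stage reduction as the paper: pass from $[A]^n$ to $P_n$ via shift-invariance and the $n/2$-to-$1$ lifting, then handle $\Pi_A(N)=\bigsqcup_{n\le N}P_n$ by splitting into short and long periods and invoking the asymptotics of Section~\ref{sec:lowlying}. The paper organizes this as an explicit ladder $\Omega_N^{(1)}\to\Omega_N^{(2)}\to\Omega_N^{(3)}\to\Omega_N^{(4)}$, first discarding non-primitive sequences and only then quotienting, whereas you absorb that step into a single inequality; the content is the same.
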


\begin{proof}
    For any $N$ even, define the following sets: 
\begin{align*}
    \Omega_{N}^{(1)} & = \{\alpha = (a_1,\dots,a_N): a_i\leq A\} = [A]^N,\\
    \Omega_{N}^{(2)} & = \{\alpha = (a_1,\dots,a_N): a_i\leq A \text{ and } \al \text{ primitive}\},\\
    \Omega_{N}^{(3)} & = \Omega_{N}^{(2)}/\sim \\
    \Omega_{N}^{(4)} & = \bigsqcup_{n\leq N \text{ even}} \Omega_{n}^{(3)}
\end{align*}

Note that $\Omega_{N}^{(3)} = P_N$ and $\Omega_{N}^{(4)} = \Pi_A(N)$. On each of the $\Omega_{N}^{(i)}$, there exists a natural analogue $C_N^{(i)}$ of the random variable $C_N=C_N^{(4)}$ on $\Omega_{N}^{(4)}$. \\

Now assume that $C_N^{(1)}$, that is, the random variable in the statement of the lemma, converges in probability to $\hat{c}$. In three steps, we will show how to upgrade this to the desired convergence of $C_N^{(4)}$ to $\hat{c}$ on $\Omega_{N}^{(4)}$. \\

\textit{Step 1:} By Proposition \ref{prop:1}, the subset of non-primitive sequences forms a set of vanishing measure as $N\to\infty$. \\

\textit{Step 2:} Every necklace in $\Omega_{N}^{(3)}$ has exactly $\frac{N}{2}$ corresponding sequences in $\Omega_{N}^{(2)}$. \\

\textit{Step 3:} Fix any $\eps>0$. We want to show that $$\Prob_N\Bigl(\abs{C_N^{(4)}-\hat{c}}>\eps\Bigr)\longrightarrow 0.$$ 

Note that $C_N^{(4)}$ restricted to each of its components $\Omega_{n}^{(3)}$ coincides with $C_n^{(3)}$. So for any $\eps>0$, we have $$\Prob_N\Bigl(\abs{C_N^{(4)}-\hat{c}}>\eps\Bigr)=\sum_{n\leq N \text{ even}} \Prob_N(\ell_p=n)\Prob_n\Bigl(\abs{C_n^{(3)}-\hat{c}}>\eps\Bigr).$$
By assumption, $\Prob_n\Bigl(\abs{C_n^{(3)}-\hat{c}}>\eps\Bigr) \longrightarrow 0$ as $n\to\infty.$ This implies that for any $\delta>0$ there exists some $N_0=N_0(\delta)$ such that 
\begin{equation}\label{eq:split}
    \forall n > N_0: \hspace{0.5cm} \Prob\Bigl(\abs{C_n^{(3)}-\hat{c}}>\eps\Bigr) < \delta. 
\end{equation}
For large $N$, we split the sum into
\begin{align*}
\Prob_N\Bigl(\abs{C_N^{(4)}-\hat{c}}>\eps\Bigr) & = \sum_{n\leq N \text{ even}} \Prob_N(\ell_p=n)\Prob_n\Bigl(\abs{C_n^{(3)}-\hat{c}}>\eps\Bigr) \\
& = \sum_{n \leq N_0 \text{ even}} \Prob_N(\ell_p=n)\Prob_n\Bigl(\abs{C_n^{(3)}-\hat{c}}>\eps\Bigr) \\
& + \sum_{N_0 < n \leq N\text{ even}} \Prob_N(\ell_p=n)\Prob_n\Bigl(\abs{C_n^{(3)}-\hat{c}}>\eps\Bigr).
\end{align*}

The first sum vanishes as $N\to\infty$ since $\Prob_N(\ell=n) \asymp \frac{N}{n}A^{n-N} \longrightarrow 0$. Moreover, the second sum can easily be bounded from above by $\delta$ using $\eqref{eq:split}$, and the result follows. 
\end{proof}

Therefore, it remains to show:
\begin{lemma}\label{lem:lastlem}
    The random variable \begin{equation*}
    C_N: [A]^N\ni\alpha \mapsto \frac{2}{N}\sum_{j=1}^N \log(T^j \overline{\al})
\end{equation*} converges in probability to a constant.
\end{lemma}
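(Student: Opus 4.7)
The plan is to reduce $C_N(\al) = \frac{2}{N}\sum_{j=1}^N \log(T^j\overline{\al})$ to a local functional of the coordinates and then apply a weak law of large numbers for i.i.d.\ sequences. Throughout, indices are read cyclically modulo $N$, and $[A]^N$ is equipped with the uniform measure, under which the coordinates $(a_i)$ are i.i.d.\ uniform on $[A]$.

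\textbf{Step 1 (exponential truncation).} For $1\leq k\leq N$ I would define $g_k(b_1,\ldots,b_k) := \log[b_1,b_2,\ldots,b_k]$ and set
\[
C_N^{(k)}(\al) := \frac{2}{N}\sum_{j=1}^N g_k(a_{j+1},\ldots,a_{j+k}).
\]
The quadratic irrational $T^j\overline{\al}$ has continued fraction $[\overline{a_{j+1},\ldots,a_{j+N}}]$ and its $k$-th convergent equals $[a_{j+1},\ldots,a_{j+k}]$. Since all partial quotients lie in $[A]$, the convergent denominators $q_k$ grow at least as fast as the Fibonacci numbers, so
\[
\bigl|T^j\overline{\al} - [a_{j+1},\ldots,a_{j+k}]\bigr| \leq \tfrac{1}{q_k q_{k+1}} = \bigO(\lambda^{-k})
\]
for some $\lambda>1$, uniformly in $j$, $N$ and $\al$. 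Because $T^j\overline{\al}\in[1,A+1]$, this passes to logarithms, yielding $\|C_N - C_N^{(k)}\|_\infty = \bigO(\lambda^{-k})$ uniformly in $N$.

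\textbf{Step 2 (concentration of $C_N^{(k)}$).} The summands of $C_N^{(k)}$ form a strictly stationary $(k-1)$-dependent process, bounded by $\log(A+1)$. By cyclic symmetry $\E[C_N^{(k)}] = 2\mu_k$, where $\mu_k := \E[g_k(U_1,\ldots,U_k)]$ with $U_i$ i.i.d.\ uniform on $[A]$, and a direct covariance count (cross-terms with cyclic index distance at least $k$ vanish) gives $\V(C_N^{(k)}) \leq C_k/N$. Chebyshev's inequality then yields $C_N^{(k)} \overset{\Prob}{\longrightarrow} 2\mu_k$ as $N\to\infty$, for each fixed $k$.

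\textbf{Step 3 (passage to the limit in $k$).} The same truncation estimate, applied to an infinite i.i.d.\ uniform sequence $(U_i)_{i\geq 1}$, shows that $\mu_k$ is Cauchy with $|\mu_k - \mu_\infty| = \bigO(\lambda^{-k})$, where $\mu_\infty := \E[\log[U_1,U_2,\ldots]]$ exists and is finite. Combining Steps 1 and 2 and letting $k\to\infty$ after $N\to\infty$ then gives $C_N \overset{\Prob}{\longrightarrow} \hat c$ with $\hat c := 2\mu_\infty$.

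The main obstacle is the uniform-in-$N$ exponential approximation of Step 1: the geometric decay rate of the convergents must be independent of the period length $N$ and of the cyclic shift $j$. The bounded-partial-quotients hypothesis is indispensable here, as it is precisely what forces the Fibonacci-type growth of $q_k$ and hence the uniform geometric decay of the truncation error. Once this is secured, Step 2 is a standard $m$-dependent weak law of large numbers and Step 3 is a routine diagonal argument. This matches the paper's description of an \emph{ergodic theorem for Markov chains}: the sliding window $(a_{j+1},\ldots,a_{j+k})$ is a stationary Markov chain on $[A]^k$, and the concentration invoked in Step 2 is precisely Birkhoff's theorem for that chain.
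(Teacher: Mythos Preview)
Your proposal is correct and follows the same three-part architecture as the paper's proof: truncate the continued fraction at depth $k$ to get $C_N^{(k)}$ (the paper's $C_{N,k}$), show uniform-in-$N$ approximation via the Fibonacci lower bound on convergent denominators, prove concentration of the truncated variable, and conclude by a diagonal argument after showing the limiting constants $2\mu_k$ converge.

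The one genuine difference lies in Step~2. The paper identifies the sliding $k$-window process as a Markov chain on $[A]^k$ and invokes the Ergodic Theorem for Markov chains to obtain $C_{N,k}\overset{\Prob}{\longrightarrow} c_k$. You instead observe that under the uniform measure on $[A]^N$ the cyclic $k$-windows form a bounded, strictly stationary, $(k{-}1)$-dependent process, bound the variance by $C_k/N$ via a direct covariance count, and apply Chebyshev. Your route is more elementary and self-contained (no appeal to an ergodic theorem), and it makes the quantitative rate $\V(C_N^{(k)})=\bigO_k(1/N)$ explicit; the paper's route is conceptually cleaner in that it names the underlying dynamical mechanism. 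Both lead to the same conclusion, and your Step~3 (bounding $|\mu_k-\mu_\infty|$ by taking expectations of the pointwise truncation error on an infinite i.i.d.\ sequence) is a slightly slicker substitute for the paper's direct verification that $(c_k)$ is Cauchy.
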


\begin{proof}

The proof relies on the following approximation scheme:

\begin{figure}[H]
    \centering
    \begin{tikzcd}[row sep=6em, column sep=10em]
        C_{N,k} \arrow[d, "k\to\infty" left, thick] \arrow[r, "N\to\infty" above, thick] & c_k \arrow[d, "k\to\infty" right, thick] \\
        C_N \arrow[r, dashed, "N\to\infty" below, thick] & \hat{c}
    \end{tikzcd}
    \caption{Proof idea for Lemma \ref{lem:lastlem}}
    \label{fig:enter-label}
\end{figure}

The random variables $C_{N,k}$ are obtained from $C_N$ by truncating the continued fraction expansion after $k$ partial quotients. We will show that these random variables converge uniformly to $C_N$ as $k\to\infty$, and in probability to a constant $c_k$ as $N\to\infty$. Moreover, we show that the sequence $(c_k)_k$ is a Cauchy sequence and thus converges to some $\hat{c}$. Combining all of this allows us to deduce that $C_N$ converges in probability to $\hat{c}$. \\

Let us now define the quantities involved in the approximation scheme. \\

For $k\in\N$ and $\beta\in\R$, let $[\beta]_k$ to be the $k$-th convergent to $\beta$, i.e. the rational approximation obtained by cutting the continued fraction expansion of $\beta$ after its first $k$ digits. Moreover, we define the following sequence of random variables: $$C_{N,k}: ~ [A]^N\ni\alpha \mapsto \frac{2}{N}\sum_{j=1}^N \log\left([T^j \overline{\al}]_k\right).$$

So if $\alpha=(a_1,\dots,a_N)$, we have for instance $$C_{N,1}(\alpha) = \frac{2}{N}\sum_{j=1}^N \log(a_j).$$

The random variables $C_{N,k}$ converge uniformly (and exponentially fast) to $C_N$ as $k\to\infty$, since 
\begin{align*}
    \abs{C_N(\al)-C_{N,k}(\al)} & = \frac{1}{N}\abs{\sum_{j=1}^N \log(T^j\overline{\al})-\log([T^j\overline{\al}]_k)} \\
    & \leq \frac{1}{N}\sum_{j=1}^N \abs{T^j\overline{\al}-[T^j\overline{\al}]_k}.
\end{align*}

Here, we used the fact that any real number $x>1$ with all partial quotients $a_i\leq A$ satisfies $x\geq [\overline{1,A}] = \frac{A+\sqrt{A^2+4A}}{2A}$, and that $\log$ is 1-Lipschitz on $(1+\eps,\infty)$ for any $\eps>0$.\\

As is well-known, the distance between an irrational number and its convergents decays exponentially fast and can be made uniform. Indeed, for any $x\in\R\setminus\Q$, we have 
\begin{equation}
    \abs{x-[x]_k}\leq \frac{1}{2^{k-1}},
\end{equation} by \cite[Lemma 1.24]{Aigner2013} and therefore $$\sup_{\substack{N > 0 \\ \alpha \in [A]^N}}\abs{C_N(\al)-C_{N,k}(\al)} \leq \frac{1}{2^{k-1}}\xrightarrow{k \to \infty} 0.$$

Now consider $k$ fixed. We have
\begin{align*}
    C_{N,k}(\alpha) & = \frac{2}{N}\sum_{j=1}^N \log([T^j \overline{\al}]_k)\\
    & = 2 \sum_{\al_k\in[A]^k}\frac{\abs{\{\text{(length $k$)-subwords of $\alpha$ equal to $\al_k$}\}}}{N}\cdot\log(\al_k). 
\end{align*}

We may set up a Markov chain, where the states are given by all words of length $k$, and we move from any given word $(a_1,\dots,a_k)$ to $(a_2,\dots,a_k,a^*)$ with probability $\frac{1}{A}$ for any $a^* \in [A]$. This Markov chain is clearly irreducible, as we can move between any two words in at most $k$ steps. Note that the ratio

\begin{equation*}
    \frac{\abs{\{\text{(length $k$)-subwords of $\alpha$ equal to $\al_k$}\}}}{N}\
\end{equation*} 

can be interpreted as the proportion of time our Markov chain spends in state $\alpha_k$ before time $N$. By the Ergodic Theorem for Markov chains \cite[Theorem 5.3]{Casarotto2007}, all these frequencies converge in probability to $\frac{1}{A^k}$ as $N\to\infty$. Since we're only dealing with finitely many summands, it follows directly that as $N\to\infty$,
$$C_{N,k}(\alpha) \overset{\Prob}{\longrightarrow} \frac{2}{A^k}\sum_{\alpha_k\in[A]^k} \log(\alpha_k) =\vcentcolon c_k.$$

Next, we claim that the sequence $(c_k)_k$ is Cauchy. Indeed, for any $c_k$ and $c_j$ with $k<j$, consider the difference $$\abs{c_k-c_j} = \abs{\frac{2}{A^k}\sum_{\alpha_k\in[A]^k} \log(\alpha_k) - \frac{2}{A^j}\sum_{\beta_j\in[A]^j} \log(\beta_j)}.$$

We make the very simple \textendash{} but crucial \textendash{} observation that any word $\beta_j$ is a concatenation $[\al_k, \gamma_{j-k}]$, where $\al_k$ and $\gamma_{j-k}$ are words of length $k$ and $j-k$, respectively. Therefore: 

\begin{align*}
    \abs{c_k-c_j} & = \abs{\frac{2}{A^k}\sum_{\alpha_k\in[A]^k} \log(\alpha_k) - \frac{2}{A^j}\sum_{\beta_j\in[A]^j} \log(\beta_j)} \\
    & = \abs{\frac{2}{A^k}\sum_{\alpha_k\in[A]^k} \log(\alpha_k)  - \frac{2}{A^j}\sum_{\alpha_k\in[A]^k} \sum_{\gamma_{j-k}\in[A]^{j-k}}\log([\al_k,\gamma_{j-k}])} \\
    & \leq \frac{2}{A^k} \sum_{\alpha_k\in[A]^k} \abs{\log(\al_k) - \frac{1}{A^{j-k}}\sum_{\gamma_{j-k}\in[A]^{j-k}}\log([\al_k,\gamma_{j-k}])} \\
    & \leq \frac{2}{A^k} \sum_{\alpha_k\in[A]^k} \frac{1}{A^{j-k}}\sum_{\gamma_{j-k}\in[A]^{j-k}}\abs{\log(\al_k) - \log([\al_k,\gamma_{j-k}])} \\
    & \leq \frac{2}{A^k} \sum_{\alpha_k\in[A]^k} \frac{1}{A^{j-k}}\sum_{\gamma_{j-k}\in[A]^{j-k}}\underbrace{ \abs{\alpha_k - [\alpha_k, \gamma_{j-k}]} }_{\leq \frac{1}{2^{k-1}}} \leq \frac{1}{2^{k-2}} \xrightarrow{k \to \infty} 0, 
\end{align*}

which implies that the sequence $(c_k)_k$ is Cauchy and converges to $$\hat{c} \vcentcolon = \lim_{k\to\infty} c_k = \lim_{k\to\infty}\frac{2}{A^k}\sum_{\alpha_k\in[A]^k} \log(\alpha_k).$$

We can now put everything together to conclude that $C_N$ converges in probability to $\hat{c}$. In other words, we want to prove that for any $\eps>0$, we have $$\Prob_N\Bigl(\abs{C_N-\hat{c}}>\eps\Bigr)\xrightarrow{N\to\infty} 0.$$ 

For any $k\geq 1$, we can write

\begin{equation}\label{eq:triangleineq}
\begin{aligned}
    & \Prob_N\Bigl(\abs{C_N - \hat{c}} > \varepsilon\Bigr)\\ 
    \leq ~ & \Prob_N\Bigl(\abs{C_N - C_{N,k}} > \tfrac{\varepsilon}{3}\Bigr)
    + \Prob_N\Bigl(\abs{C_{N,k} - c_k} > \tfrac{\varepsilon}{3}\Bigr) 
    + \mathbbm{1}_{\abs{c_k - \hat{c}} > \tfrac{\varepsilon}{3}}.
\end{aligned}
\end{equation}

Since $c_k\to\hat{c}$, there exists some $k_0$ such that for all $k>k_0$, we have $\abs{c_k-\hat{c}}<\frac{\eps}{3}$. \\

Moreover, since $C_{N,k}$ converges uniformly to $C_N$, there exists some $k_1$ such that for all $k>k_1$ and all $N\geq 1$, we have $$\Prob_N\left(\abs{C_{N}-C_{N,k}}>\frac{\eps}{3}\right) =0.$$

Now let $\delta>0$ and fix some $k\geq \max\{k_0,k_1\}$. Since $C_{N,k}$ converges in probability to $c_k$ as $N\to\infty$, there exists $N_0$ such that for all $N>N_0$, we have $$\Prob_N\left(\abs{C_{N,k}-c_k}>\frac{\eps}{3}\right) < \delta.$$

Therefore, for such $k$, we can bound the first summand of \eqref{eq:triangleineq} by $\delta$, whereas the second and third summand vanish, and thus $$\Prob_N\Bigl(\abs{C_N - \hat{c}} > \varepsilon\Bigr) < \delta.$$

Since $\delta>0$ was arbitrary, this completes the proof.
\end{proof}

Before turning to word length, we want to briefly discuss the variance $\sigma_g^2$. Since $\frac{\ell_g}{\ell_p}$ converges in probability to $\hat{c}$, it follows that $\sqrt{\frac{\ell_p}{\ell_g}}$ converges to $\frac{1}{\sqrt{\hat{c}}}$. Hence
$$\frac{\Psi}{\sqrt{\ell_g}}
= \underbrace{\frac{\Psi}{\sqrt{\ell_p}}}_{\xrightarrow{d} \mathcal{N}(0,\sigma_p^2)} 
\cdot ~ \underbrace{\sqrt{\frac{\ell_p}{\ell_g}}}_{\xrightarrow{\Prob}\frac{1}{\sqrt{\hat{c}}}} \xrightarrow{\hspace{0.2cm} d \hspace{0.2cm}} \mathcal{N}\left(0,\frac{\sigma_p^2}{\hat{c}}\right),$$
and the variance $\sigma_g^2$ is given by $$\sigma_g^2 = \frac{\sigma_p^2}{\hat{c}} = \frac{A^2-1}{12\hat{c}}.$$

\subsection*{Word Length}
The typical generators of $\G$ are the infinite order translation $T$ and the reflection $S$ of order $2$ given by $$T=\pm \begin{pmatrix}
    1 & 1 \\
    1 & 0
\end{pmatrix}, ~~ S = \pm \begin{pmatrix}
    0 & -1 \\ 1 & 0
\end{pmatrix}.$$ Alternatively, defining $$M = TS = \pm \begin{pmatrix}
    1 & -1 \\ 1 & 0
\end{pmatrix},$$ we have the group presentation $$\Gamma =  \langle S, M \mid S^2 = M^3 = 1 \rangle \cong \Z_2 * \Z_3.$$

Recalling \eqref{eq:matrixproduct}, we associate with each necklace $\al=(a_1,\dots,a_n)$ of even length $n$ its hyperbolic group element \begin{equation}\label{eq:gamma_al}
    \pm \ga_{\al} = \prod_{i=1}^n \begin{pmatrix}
    a_i & 1 \\ 1 & 0
\end{pmatrix}=\prod_{i=1}^{n/2}\Biggl(\begin{pmatrix}
    a_{2i-1} & 1 \\ 1 & 0
\end{pmatrix}\begin{pmatrix}
    a_{2i} & 1 \\ 1 & 0
\end{pmatrix}\Biggr).
\end{equation}

One checks that for all $a,b\in\Z$, we have $$\begin{pmatrix}
    a & 1 \\ 1 & 0
\end{pmatrix}\begin{pmatrix}
    b & 1 \\ 1 & 0
\end{pmatrix} = T^aST^{-b}S = (MS)^aS(MS)^{-b}S = (MS)^a(M^{-1}S)^b,$$ and hence $$\ell_w\left( \begin{pmatrix}
    a & 1 \\ 1 & 0
\end{pmatrix}\begin{pmatrix}
    b & 1 \\ 1 & 0
\end{pmatrix} \right) = 2a+2b.$$

Therefore, a necklace $\al=(a_1,\dots,a_n)$ with $\ga_{\al}$ as in \eqref{eq:gamma_al} has word length given by $$\ell_w(\al)\vcentcolon=\ell_w(\ga_{\al}) = 2\sum_{i=1}^n a_i.$$ 

Using the same arguments as in the geometric length case, it suffices to show that the random variable $$\frac{\ell_w}{\ell_p}:[A]^N\ni\al=(a_1,\dots,a_N) \mapsto \frac{2\sum_{i=1}^{N} a_i}{N} $$ converges to a constant as $N\to\infty$. In contrast to the geometric length case, this follows directly from the Law of Large Numbers due to the independence of the individual summands, and the constant is given by $$\frac{\ell_w}{\ell_p} \xrightarrow{\hspace{0.1cm} \Prob \hspace{0.1cm} } 2\frac{A+1}{2} = A+1.$$
Hence we get $$\frac{\Psi}{\sqrt{\ell_w}}
= \underbrace{\frac{\Psi}{\sqrt{\ell_p}}}_{\xrightarrow{\hspace{0.1cm}d\hspace{0.1cm}} \mathcal{N}(0,\sigma_p^2)} 
\cdot ~ \underbrace{\sqrt{\frac{\ell_p}{\ell_w}}}_{\xrightarrow{\hspace{0.1cm}\Prob\hspace{0.1cm}}\frac{1}{\sqrt{A+1}}} \xrightarrow{\hspace{0.2cm} d \hspace{0.2cm}} \mathcal{N}\left(0,\sigma_w^2\right),$$ with $$\sigma_w^2 = \frac{\sigma_p^2}{A+1} = \frac{1}{12}\frac{A^2-1}{A+1}=\frac{A-1}{12}.$$

\vspace{1cm}
\begin{sloppypar}
\printbibliography
\end{sloppypar}
\end{document}